\documentclass[11pt]{amsart}
\usepackage{CJK} % For Chinese characters
\usepackage{manfnt} % For cone (mancone) symbol
\usepackage{hyperref}
\usepackage{tikz}
\usetikzlibrary{arrows.meta, positioning}
\usepackage{amsfonts}
\usepackage{amsmath}
\usepackage{amssymb}
\usepackage{amscd}
\usepackage{graphicx}
\usepackage{enumitem}
\usepackage{latexsym}
\usepackage{color}
\usepackage{comment}
\usepackage{epsfig}
\usepackage{amsopn}
\usepackage{tikz-cd}
\usepackage{mathrsfs}
\usepackage{array}
\usepackage{epsfig}
\usepackage{pst-grad} % For gradients
\usepackage{pst-plot} % For axes
\usepackage[space]{grffile} % For spaces in paths
\usepackage{etoolbox} % For spaces in paths
\makeatletter % For spaces in paths
\patchcmd\Gread@eps{\@inputcheck#1 }{\@inputcheck"#1"\relax}{}{}
\makeatother

\hypersetup{pdfpagemode=UseNone}
\usepackage{booktabs}
\usepackage{longtable}
\theoremstyle{plain}
\newtheorem{lemma}{Lemma}[section]
\newtheorem*{theorem*}{Theorem}
\newtheorem*{lemma*}{Lemma}
\newtheorem*{proposition*}{Proposition}
\newtheorem*{conjecture*}{Conjecture}
\newtheorem*{corollary*}{Corollary}
\newtheorem*{problem*}{Problem}
\newtheorem{theorem}[lemma]{Theorem}

\newtheorem{corollary}[lemma]{Corollary}
\newtheorem{proposition}[lemma]{Proposition}
\newtheorem{claim}[lemma]{Claim}

\theoremstyle{definition}
\newtheorem{definition}[lemma]{Definition}
\newtheorem{example}[lemma]{Example}
\newtheorem{remark}[lemma]{Remark}

\DeclareMathOperator{\Aut}{Aut}

\DeclareMathOperator{\ord}{ord}

\DeclareMathOperator{\sHom}{\mathcal{H}\kern -.5pt\mathit{om}}
\DeclareMathOperator{\sTor}{\mathcal{T}\kern -1.5pt\mathit{or}}

\begin{document}

\date{\today}

\author[P. Vikash]{Pisya Vikash}
\address{Department of Mathematics, The Pennsylvania State University, University Park, PA 16802}
\email{pmv5172@psu.edu}

\subjclass[2020]{Primary 32J27; Secondary 14J28}
\keywords{poor manifolds, $K3$ surfaces, complex tori.}

\title{Towards the classification of Poor Manifolds}

\begin{abstract}
The notion of poor manifolds was introduced by Zarhin and Bandman, who asked for their classification in \cite[Question 4]{bandman2024jordan}. In this paper, we answer this question completely in dimensions at most 3. We also classify poor compact K\"ahler manifolds of arbitrary dimension under the additional assumption that Kodaira dimension is not $-\infty$. We classify all poor $K3$ surfaces. Finally, give a sufficient condition for a compact K\"ahle manifold to have Kodaira dimension greater than or equal to zero.
\end{abstract}
\maketitle

\begingroup
  \setcounter{tocdepth}{1}
  \setlength{\parskip}{0pt}
  \tableofcontents
\endgroup

\section{Introduction}\label{sec:intro}

The notion of poor manifolds was introduced by Bandman and Zarhin
\cite{bandman2024jordan} in connection with questions on automorphism groups
and bimeromorphic geometry. We recall the definition.

\begin{definition}
A codimension-one subvariety of a complex manifold \(X\) is a closed analytic
subset \(Z\subset X\) such that
\[
\dim Z=\dim X-1,
\]
or equivalently \(\operatorname{codim}_X Z=1\).
\end{definition}

\begin{definition}
A rational curve in a complex manifold \(X\) is the image of a non-constant
holomorphic map
\[
\mathbb P^1 \longrightarrow X.
\]
\end{definition}

\begin{definition}[\cite{bandman2024jordan}]
A compact connected complex manifold \(X\) is called \emph{poor} if it contains
no closed analytic subsets of codimension \(1\) and no rational curves.
\end{definition}

Poor manifolds are interesting because the absence of divisors and rational
curves imposes strong rigidity. If \(X\) is poor, then \(X\) is
meromorphically hyperbolic in the sense of Fujiki \cite{Fujiki80}, also see
\cite{bandman2021bimeromorphic}; in particular every bimeromorphic self-map of
\(X\) is holomorphic, and hence
\[
\operatorname{Bim}(X)=\operatorname{Aut}(X).
\]
Bandman and Zarhin used this rigidity to study automorphism groups of naturally
associated manifolds, for example \(\mathbb P^1\)-bundles over poor manifolds.
At the same time, poor manifolds occur naturally: a complex torus is poor if
and only if it has algebraic dimension zero
\cite{bandman2021bimeromorphic}.

The general expectation is that poor K\"ahler manifolds should be built from
two basic types of objects: complex tori and irreducible holomorphic
symplectic manifolds. This philosophy is closely related to Campana's work on
compact K\"ahler manifolds without codimension-one subvarieties
\cite{campana2004isotrivialite,campana2006isotrivialite,campana2013compact}.
We are grateful to Fr\'ed\'eric Campana for pointing out that several
low-dimensional consequences follow from this circle of ideas. Poor manifolds are closely related to simple manifolds introduced by Campana; see for example \cite{campana1983densite,Campana2004Orbifolds,campana2006isotrivialite,Campana2011Orbifoldes,CampanaDemaillyVerbitsky2014}. Related structural results and applications to threefolds and hyperk\"ahler manifolds appear in \cite{CampanaPeternell2000,CampanaPeternell2001Kummer,CampanaHoringPeternell2016,CampanaOguisoPeternell2010}.

We recall the form of the Beauville--Bogomolov decomposition theorem used
throughout the paper.

\begin{theorem}[Beauville--Bogomolov decomposition, \cite{beauville1983varietes}]
\label{thm:theorem1}
Let \(X\) be a compact connected K\"ahler manifold with
\(c_1(X)_{\mathbb R}=0\). Then \(X\) admits a finite unramified cover
\(\widetilde X\) which decomposes holomorphically as
\[
\widetilde X
=
T\times X_1\times \cdots \times X_r
\times Y_1\times \cdots \times Y_s,
\]
where \(T\) is a complex torus, each \(X_i\) is an irreducible holomorphic
symplectic manifold, and each \(Y_j\) is a Calabi--Yau manifold.
\end{theorem}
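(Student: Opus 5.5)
The plan is to follow Beauville's original route: first show that every compact Kähler manifold with vanishing real first Chern class admits a Ricci-flat Kähler metric, then split the universal cover by de Rham and Berger holonomy, and finally descend the splitting to a finite étale cover.

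\emph{Step 1 (Ricci-flat metric).} Fix a Kähler class \([\omega]\) on \(X\). Since \(c_1(X)_{\mathbb R}=0\), the Ricci form of \(\omega\) is \(d\)-exact, hence \(\partial\bar\partial\)-exact by the \(\partial\bar\partial\)-lemma. Yau's solution of the Calabi conjecture then produces a unique Kähler metric \(g\) in \([\omega]\) with \(\operatorname{Ric}(g)=0\). We take this as a black box.

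\emph{Step 2 (de Rham splitting and holonomy).} Let \(\widehat X\) be the universal cover with the pulled-back metric, which is complete. By the de Rham decomposition theorem,
\[
\widehat X\cong \mathbb C^k\times \prod_i M_i
\]
isometrically, where each \(M_i\) is simply connected with irreducible holonomy. Each factor is Kähler and Ricci-flat, so each holonomy group lies in \(SU\). Berger's classification, which excludes symmetric spaces because these are Ricci-flat but not flat, then leaves two possibilities: holonomy \(SU(m_i)\), the Calabi--Yau case, or \(Sp(m_i/2)\), the irreducible holomorphic symplectic case. Bochner's principle, that parallel tensors coincide with holomorphic ones in the Ricci-flat setting, identifies these holonomy types with the holomorphic conditions in the definitions: the only holomorphic forms are \(0\) and the volume form, respectively generated by a symplectic form.

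\emph{Step 3 (compactness of the factors and descent), the main obstacle.} We must show that each \(M_i\) is compact and that a finite-index subgroup of \(\pi_1(X)\) acts by translations on \(\mathbb C^k\) and trivially on \(\prod_i M_i\). Following Cheeger--Gromoll, apply the splitting theorem to a compact manifold of nonnegative Ricci curvature: \(\pi_1(X)\) contains a finite-index free abelian subgroup, and the non-Euclidean factor \(\prod_i M_i\) is compact. Since \(\prod_i M_i\) is compact, its isometry group is a compact Lie group. Moreover, a compact Kähler manifold with \(h^{1,0}=0\) has a discrete automorphism group, because vector fields are parallel and none survive; combining these, the isometry group of \(\prod_i M_i\) is finite. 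So \(\pi_1(X)\) acts on \(\prod_i M_i\) through a finite quotient, and on \(\mathbb C^k\) by Euclidean motions. The Bieberbach theorem then gives a finite-index subgroup \(\Gamma\) acting trivially on \(\prod_i M_i\) and by a lattice of translations on \(\mathbb C^k\).

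\emph{Conclusion.} The cover \(\widetilde X=\widehat X/\Gamma\) is finite and unramified over \(X\), and equals
\[
T\times\prod_i M_i,\qquad T=\mathbb C^k/\Gamma .
\]
Each \(M_i\) is an irreducible holomorphic symplectic or Calabi--Yau factor, as identified in Step 2. The delicate points are the finiteness of the isometry group on the compact factor and the Bieberbach step; I would handle both by citing Beauville's paper directly.
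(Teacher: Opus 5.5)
The paper gives no proof of its own and simply cites Beauville; your outline is exactly Beauville's argument and is correct: Yau's Ricci-flat metric, de Rham and Berger on the universal cover, Cheeger--Gromoll for compactness of the non-flat factor, finiteness of that factor's holomorphic isometry group, then Bieberbach. Three small points need tightening. Discreteness of the automorphism group needs Ricci-flatness (it fails for $\mathbb{P}^n$), and together with compactness of $\operatorname{Isom}$ it only gives finiteness of the \emph{holomorphic} isometries, which suffices because $\pi_1(X)$ acts holomorphically. Also, Bochner's principle identifying the factors as Calabi--Yau or irreducible holomorphic symplectic should be invoked only after compactness of the $M_i$ is established in Step~3.
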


Since points are codimension-one analytic subsets on curves, no compact
complex manifold of dimension \(1\) is poor. We therefore focus on dimension at
least \(2\). Our first main result gives the precise structural statement under the
assumption that the Kodaira dimension is not negative. It describes exactly
which finite quotients of products of poor tori and poor irreducible
holomorphic symplectic manifolds remain poor.

\begin{theorem}
\label{thm:intro-main}
Let \(X\) be a poor compact K\"ahler manifold with
\(\kappa(X)\neq -\infty\). Then there exist a poor complex torus \(T\)
(possibly a point), a product
\[
H=\prod_i H_i
\]
of poor compact irreducible holomorphic symplectic manifolds \(H_i\)
(possibly a point), and a finite group
\[
G\subset \operatorname{Aut}(T)\times \operatorname{Aut}(H)
\]
acting freely and holomorphically on \(T\times H\), such that
\[
X \cong (T\times H)/G.
\]
Moreover, the action of \(G\) preserves the factors, except possibly for
permutations of isomorphic irreducible holomorphic symplectic factors. In
particular, \(G\) fits into a short exact sequence
\[
1\longrightarrow G_T\longrightarrow G\longrightarrow G_H\longrightarrow 1,
\]
where \(G_T\subseteq \operatorname{Aut}(T)\) is finite and acts freely on
\(T\), and \(G_H\) is a finite subgroup of \(\operatorname{Aut}(H)\).
Conversely, any such free quotient \((T\times H)/G\) is poor.
\end{theorem}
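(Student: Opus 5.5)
The plan is to first show that a poor compact Kähler manifold $X$ with $\kappa(X)\ge 0$ has $c_1(X)_{\mathbb R}=0$. Then I would apply Theorem~\ref{thm:theorem1} and analyse which factors of the splitting cover can occur.

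For the first step, take $m>0$ with $h^0(X,mK_X)\neq 0$ and a nonzero section $s$. Its zero locus is either empty or a divisor. Since $X$ is poor it must be empty, so $mK_X\cong\mathcal O_X$ and $K_X$ is torsion; in particular $c_1(X)_{\mathbb R}=0$ and $\kappa(X)=0$. Theorem~\ref{thm:theorem1} then gives a finite étale cover $\widetilde X=T\times\prod X_i\times\prod Y_j\to X$. Pulling back along a finite unramified map preserves the absence of divisors and of rational curves: the image of a divisor under a finite map is a divisor, and a rational curve in the cover maps to a rational curve in $X$. Hence $\widetilde X$ is poor. On a product $A\times B$, any divisor $D\subset A$ gives the divisor $D\times B$, so each factor must itself be poor. (If a factor is a point we simply drop it.)

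Next I would rule out Calabi--Yau factors. A Calabi--Yau manifold $Y$ of dimension $\ge 3$ with $h^{2,0}=0$ is projective by Kodaira's criterion, since $H^2(Y,\mathbb R)=H^{1,1}$ contains a rational Kähler class. A positive-dimensional projective manifold carries hyperplane sections, which are divisors, so $Y$ is not poor. Thus $s=0$, and the cover is $T\times H$ with $T$ a poor torus and $H=\prod H_i$ a product of poor IHS manifolds. Replacing $\widetilde X$ by the Galois closure of the cover, which is still of this form and still étale, gives $X\cong (T\times H)/G$ with $G$ finite and acting freely.

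The main obstacle is showing that $G$ respects the decomposition, i.e.\ $\operatorname{Aut}(T\times H)$ splits as $\operatorname{Aut}(T)\times\operatorname{Aut}(H)$, with $\operatorname{Aut}(H)$ permuting isomorphic factors. I would argue via the Albanese map: $T\times H\to T$ is the Albanese map because $H$ is simply connected with $h^{1,0}=0$. By functoriality every automorphism descends to $T$, giving a homomorphism $G\to\operatorname{Aut}(T)$. Moreover, for fixed $t$ the composite $H\to T\times H\to H$ is a family of automorphisms of $H$ parametrised by the compact connected $T$. Since $\operatorname{Aut}^0(H)$ is trivial (as $H^0(T_H)=H^0(\Omega^1_H)=0$), this family is constant, so each element of $G$ has the form $(t,h)\mapsto(g_1(t),g_2(h))$. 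The statement that $\operatorname{Aut}(\prod H_i)$ is generated by products of factor automorphisms together with permutations of isomorphic factors follows from uniqueness of the de~Rham/Beauville decomposition of the Ricci-flat metric. With these projections, the kernel of $G\to G_H$ is $G_T$. This kernel acts on $T\times H$ trivially on the second factor, so freeness of $G$ forces $G_T$ to act freely on $T$, which gives the exact sequence.

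For the converse, let $X=(T\times H)/G$ with $G$ acting freely and $T$, $H_i$ poor. It suffices to show $T\times H$ is poor, since divisors and rational curves in $X$ lift to the finite étale cover. A rational curve in $T\times H$ projects to points in $T$ (tori contain no rational curves) and then to rational curves or points in each $H_i$; poorness excludes the curves. For divisors, I would use that a product of manifolds with $H^1(\mathcal O)=0$ factors or with algebraic dimension zero has no divisors. Concretely, $\operatorname{Pic}$ of the product is governed by line bundles on the factors via Künneth plus $h^1(\mathcal O_{H_i})=0$. An effective divisor $D$ with class $(a,b_1,\dots)$ restricted to a generic fibre $\{t\}\times H$ gives an effective divisor or a trivial class on $H$. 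Poorness of the factors then forces each component class to be trivial, and a divisor with trivial class on a compact Kähler manifold is empty. This is the second delicate point; if it fails directly, I would invoke Campana's result that a product of Kähler manifolds without divisors and with $a=0$ has none.
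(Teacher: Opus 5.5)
Your proposal is correct and follows the same route as the paper: torsion $K_X$, the Beauville--Bogomolov cover, exclusion of Calabi--Yau factors by projectivity, the splitting $G\subset\operatorname{Aut}(T)\times\operatorname{Aut}(H)$, freeness of the kernel, and the converse via poorness of $T\times H$ plus lifting along the finite \'etale cover. The main difference is that you reprove two results the paper simply cites. You get the automorphism splitting from the Albanese map and the discreteness of $\operatorname{Aut}(H)$, where the paper quotes Beauville (Theorem~\ref{proposition : prop1}), and you get product poorness from the splitting of $\operatorname{Pic}(T\times H)$, where the paper quotes Lemma~\ref{lemma:lem0}. You are also slightly more careful than the paper in passing to a Galois cover, and in observing directly that elements of $G_T$ act as $(g,\mathrm{id})$ and hence freely, without the paper's detour through Lemma~\ref{lemma:aut}.
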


\begin{proof}
See Theorem \ref{thm:main}.
\end{proof}

The assumption \(\kappa(X)\neq -\infty\) cannot presently be removed in all
dimensions. In dimension \(3\), however, the necessary uniruledness statement
is known.

\begin{theorem}[H\"oring--Peternell, \cite{horing2016minimal}]
\label{theorem:-infty}
Let \(X\) be a compact K\"ahler manifold of complex dimension \(3\). Then
\(X\) is uniruled if and only if \(\kappa(X)=-\infty\).
\end{theorem}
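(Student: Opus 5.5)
The statement is quoted from \cite{horing2016minimal}, so the plan is to reduce it to the standard package for compact K\"ahler threefolds rather than reprove it from scratch. The two implications are of very different difficulty.

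\emph{Uniruled \(\Rightarrow \kappa(X)=-\infty\).} Suppose \(X\) is uniruled. Then there is a covering family of rational curves \(f\colon \mathbb P^1\to X\) through a general point of \(X\). For a general member we have \(K_X\cdot f_*[\mathbb P^1]<0\), because \(f^*T_X\) is nef on a free rational curve and contains \(T_{\mathbb P^1}\). Suppose some nonzero section \(s\in H^0(X,mK_X)\) existed with \(m>0\). Its zero divisor would meet a general member of the family properly, which forces \(mK_X\cdot C\ge 0\) and contradicts the negativity. Hence \(\kappa(X)=-\infty\). This direction uses only the deformation theory of rational curves and works in every dimension.

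\emph{\(\kappa(X)=-\infty \Rightarrow\) uniruled.} I split into two cases according to whether \(K_X\) is pseudoeffective.

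\begin{enumerate}
\item If \(K_X\) is not pseudoeffective, I would invoke Brunella's theorem for compact K\"ahler threefolds. It is the K\"ahler analogue of Boucksom--Demailly--P\u{a}un--Peternell and says that a non-pseudoeffective canonical class forces \(X\) to be uniruled. The argument behind it runs through foliations: a suitable foliation with non-pseudoeffective canonical class has algebraic leaves that are rationally connected.
\item If \(K_X\) is pseudoeffective, I run the K\"ahler minimal model program for threefolds developed by H\"oring--Peternell. Cone theorem, contractions, flips and termination all hold in the K\"ahler threefold setting. The output is a terminal model \(X'\), bimeromorphic to \(X\), with \(K_{X'}\) nef; Kodaira dimension is a bimeromorphic invariant, so \(\kappa(X')=\kappa(X)\).
\end{enumerate}

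It then remains to prove nonvanishing, \(\kappa(X')\ge 0\), for a terminal K\"ahler threefold with nef canonical class. This is the step I expect to be the main obstacle. My first attempt would split by algebraic dimension:

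\begin{itemize}
\item If \(a(X')=3\), then \(X'\) is Moishezon, hence projective after the MMP, and the projective abundance theorem of Miyaoka and Kawamata applies.
\item If \(a(X')\le 2\), I would use the algebraic reduction and Campana--Peternell's classification of non-algebraic K\"ahler threefolds, together with Demailly--Peternell's results on nef canonical classes. These reduce the problem to the case \(c_1=0\), where Beauville--Bogomolov (Theorem \ref{thm:theorem1}) gives \(\kappa=0\), or to fibrations over lower-dimensional bases, where one argues via \(C_{n,m}\)-type additivity.
\end{itemize}

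Combining the two cases of the second direction gives the equivalence.
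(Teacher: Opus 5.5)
The paper gives no proof of this statement; it quotes H\"oring--Peternell. Your skeleton is the standard route to that result, and most of it is sound:
\begin{itemize}
\item the implication uniruled $\Rightarrow\kappa(X)=-\infty$ via free rational curves;
\item Brunella's theorem when $K_X$ is not pseudoeffective;
\item the H\"oring--Peternell minimal model program, producing a terminal $X'$ with $K_{X'}$ nef when $K_X$ is pseudoeffective;
\item the case $a(X')=3$.
\end{itemize}
The gap is the nonvanishing $\kappa(X')\ge 0$ for $a(X')\le 2$. This carries the real content of $\kappa=-\infty\Rightarrow$ uniruled, and your sketch does not establish it.

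Concretely, there are three problems.
\begin{itemize}
\item Theorem~\ref{thm:theorem1} is a statement about compact K\"ahler \emph{manifolds}, but $X'$ may have terminal singularities. Passing to a resolution does not help: terminal discrepancies are positive, so the resolution no longer has $c_1=0$. The ``$c_1=0$'' case is therefore not covered by Beauville--Bogomolov as you use it.
\item If $a(X')=0$, the algebraic reduction is the constant map. When $X'$ is moreover simple (no covering family of positive-dimensional subvarieties), there is no fibration of any kind. Then neither structure results for the algebraic reduction nor additivity say anything, and this case needs a genuinely separate argument.
\item For $a(X')\in\{1,2\}$, the algebraic reduction is only meromorphic. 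On a model where it becomes holomorphic, the canonical class is no longer nef. Moreover, an inequality $\kappa(X)\ge\kappa(F)+\kappa(B)$ is vacuous when the base is $\mathbb P^1$ or a rational surface, which you cannot rule out. What is needed there is positivity coming from nefness of $K_{X'}$ (positivity of direct images, a canonical bundle formula), not $C_{n,m}$-type additivity.
\end{itemize}

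The fix is not to re-derive nonvanishing but to cite it. The nonvanishing theorem for terminal minimal compact K\"ahler threefolds, building on work of Peternell and Demailly--Peternell, is the input the cited result rests on. With that black box, your argument is complete and coincides with the derivation behind the quoted theorem.
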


We shall also use the following abundance theorem.

\begin{theorem}[Campana--H\"oring--Peternell, \cite{campana2016abundance}]
Let \(X\) be a compact K\"ahler threefold. If \(K_X\) is nef, then \(K_X\) is
semiample.
\end{theorem}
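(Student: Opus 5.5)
Since $X$ is not projective in general, I would argue by cases on the numerical dimension $\nu=\nu(K_X)\in\{0,1,2,3\}$, reducing each case either to known projective results or to a Kähler version of Kawamata's theorem that a nef $K_X$ with $\kappa(X)=\nu(K_X)$ is semiample. First the two extreme cases. If $\nu=3$, then $K_X$ is nef and big, so $X$ is Moishezon. A Moishezon Kähler manifold is projective, and the Kawamata--Shokurov base point free theorem then gives semiampleness. If $\nu=0$, then $c_1(X)_{\mathbb R}=0$, and Theorem~\ref{thm:theorem1} gives a finite étale cover on which $K$ is trivial. Hence $K_X$ is torsion, and in particular semiample.

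For the intermediate cases $\nu\in\{1,2\}$ the strategy has two parts. Part (a) is to prove the nonvanishing $\kappa(X)\ge 0$ and then upgrade it to $\kappa(X)=\nu(K_X)$. Part (b) is to invoke a Kähler analogue of Kawamata's theorem: once $\kappa=\nu$, take the Iitaka fibration, which is a meromorphic map onto a projective base. Pass to a resolution, where the canonical bundle formula writes $K_X$ as a pullback of a big nef $\mathbb Q$-divisor plus exceptional data, and apply base point freeness on the base.

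I would handle part (a) by splitting according to irregularity and algebraic dimension. If $q(X)>0$, use the Albanese map, or more generally the algebraic reduction when $a(X)>0$. The fibres are curves or surfaces with nef canonical bundle, where abundance is classical. Then use Iitaka-type additivity (known in these low fibre dimensions for Kähler spaces) together with positivity of direct images $f_*\omega^{m}_{X/Y}$ to conclude that $\kappa(X)\ge\kappa(F)+\kappa(Y)$, and compare this with $\nu$. If $X$ is projective, we are in the classical Miyaoka--Kawamata setting, so the remaining case is $a(X)=0$ and $q(X)=0$.

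In that case I would use Riemann--Roch:
\[
\chi(X,mK_X)=\tfrac{m^3}{6}K_X^3-\tfrac{m}{12}K_X\cdot c_2(X)+\chi(\mathcal O_X)\,(\text{sign adjusted}).
\]
Here $K_X^3=0$ when $\nu\le 2$. Miyaoka's inequality $K_X\cdot c_2(X)\ge 0$ for nef $K_X$ must be established in the Kähler setting, for instance via generic semipositivity of $\Omega_X$ or Enoki's stability argument. Combined with the vanishing $h^3(mK_X)=h^0((1-m)K_X)=0$ and Serre duality on $h^2$, this should force growth of $h^0(mK_X)$ unless $\chi(\mathcal O_X)$ and $K_X\cdot c_2$ both vanish. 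Those degenerate situations are then treated separately: the vanishing of $K_X\cdot c_2$ should produce holomorphic forms, hence a torus or Kummer-like structure, or a contradiction with $q=0$.

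I expect this last case to be the main obstacle. It is the nonvanishing for nef $K_X$ with $\nu\in\{1,2\}$ on a non-algebraic threefold with $q=0$. Projective techniques such as bend-and-break and ample divisors are unavailable there, so one must rely on the Kähler MMP and on deformation to projective models. I would first try to show that such $X$ carries a nonzero holomorphic $2$-form, and then exploit the resulting foliation.
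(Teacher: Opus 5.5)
The paper gives no proof of this statement. It is quoted from Campana--H\"oring--Peternell and used as a black box, so your outline has to stand on its own. The extreme cases $\nu=0$ and $\nu=3$ are handled correctly, and Part (b) is a reasonable outline of Kawamata's argument.

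The genuine gap is Part (a), which conflates nonvanishing with abundance. An additivity inequality $\kappa(X)\ge\kappa(F)+\kappa(Y)$ only bounds $\kappa$ from below. It can never give the upper bound $\nu(K_X)\le\kappa(X)$ that you need before Part (b) applies, and ``compare this with $\nu$'' is not an argument. For instance, over an elliptic curve with $\kappa(F)=1$ it yields $\kappa(X)\ge 1$, while $\nu(K_X)$ could a priori be $2$. For non-projective $X$ one has $\kappa(X)\le a(X)\le 2$, so what must actually be proved is $\nu(K_X)\le a(X)$: in particular $K_X\equiv 0$ when $a(X)=0$, and $\nu(K_X)\neq 2$ when $a(X)=1$.

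For $a(X)\in\{1,2\}$, the missing idea is to work on the general fibre $F$ of (a holomorphic model of) the algebraic reduction $X\dashrightarrow Y$. The relative pluricanonical maps land in a variety that is projective over the projective base $Y$ and is dominated by $X$. Hence $\dim Y+\kappa(F)\le a(X)=\dim Y$, which forces $\kappa(F)=0$. Since $F$ is a curve or a surface, this together with nefness makes the pullback of $K_X$ numerically trivial on $F$. One must then still show that $K_X$ is numerically a pullback from $Y$; only after that can the canonical bundle formula on the projective base take over. Nothing in your outline does this.

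Your Riemann--Roch step also fails as written. With $K_X^3=0$ one has $\chi(X,mK_X)=(1-2m)\chi(\mathcal O_X)=(2m-1)\,K_X\cdot c_2(X)/24$. The term $h^2(X,mK_X)=h^1(X,(1-m)K_X)$ is not controlled by Serre duality. It already grows linearly for $C\times S$ with $g(C)\ge 2$ and $S$ a $K3$ surface, so no growth of $h^0$ follows without a vanishing theorem you have not supplied.

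More importantly, the computation at $m=1$ disposes of the case you call the main obstacle. Grant $K_X\cdot c_2(X)\ge 0$, which you invoke but do not prove. Then $\chi(\mathcal O_X)=-K_X\cdot c_2(X)/24\le 0$, while $\chi(\mathcal O_X)=1-q+h^{2,0}-h^{3,0}$. If $q=0$ and $X$ is not projective, Kodaira's criterion gives $h^{2,0}\ge 1$. Hence $h^0(X,K_X)=h^{3,0}\ge 2$, so $\kappa(X)\ge 1$ and $a(X)\ge 1$. This is the same Hodge-number trick the paper uses in the proof of Theorem~\ref{thm:main theorem}.

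So $a(X)=0$ forces $q(X)>0$, where the Albanese map is available, and your holomorphic $2$-form/foliation programme is aimed at an empty case. What your outline really leaves open is the K\"ahler inequality $K_X\cdot c_2(X)\ge 0$ itself and the upper bound on $\nu(K_X)$ described above.
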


Combining these results with Theorem \ref{thm:intro-main} gives the following
low-dimensional classification.

\begin{corollary}
Let \(X\) be a poor compact K\"ahler manifold of dimension \(2\) or \(3\).
Then:
\begin{itemize}
    \item if \(\dim_{\mathbb C}X=2\), then \(X\) is either a poor \(K3\)
    surface or a finite \'etale quotient of a poor complex torus;
    \item if \(\dim_{\mathbb C}X=3\), then \(X\) is a finite \'etale quotient
    of a poor complex torus.
\end{itemize}
\end{corollary}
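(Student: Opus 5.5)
The plan is to reduce to Theorem \ref{thm:intro-main} by first showing that a poor compact K\"ahler surface or threefold has $\kappa(X)\neq-\infty$, and then to read off which products $T\times H$ can occur in each dimension.

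\textbf{Step 1: $\kappa(X)\neq-\infty$.} In dimension $3$ this follows from Theorem \ref{theorem:-infty}. If $\kappa(X)=-\infty$, then $X$ is uniruled, so it is covered by rational curves. This contradicts poorness, since $X$ contains none.

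In dimension $2$ I would use the Enriques--Kodaira classification of compact K\"ahler surfaces. Those with $\kappa=-\infty$ are rational or ruled, hence contain rational curves. So a poor K\"ahler surface has $\kappa\ge 0$.

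\textbf{Step 2: apply Theorem \ref{thm:intro-main}.} This gives $X\cong (T\times H)/G$ with $G$ acting freely, $T$ a poor torus and $H=\prod H_i$ a product of poor irreducible holomorphic symplectic manifolds. Each $H_i$ has even dimension at least $2$, so $\dim T+\sum\dim H_i=\dim X$.

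For the bookkeeping I will use one observation. If both $T$ and $H$ are positive-dimensional, then $T\times H$ contains $T\times D$ whenever $H$ contains a divisor $D$; but a poor $H$ contains no divisor. So I will not try to exclude mixed products via divisors. Instead I rely on the converse direction of Theorem \ref{thm:intro-main}, which says all such quotients are poor, together with a dimension count.

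\textbf{Step 3: dimension $2$.} Either $H$ is a point and $X=T/G$ is an \'etale quotient of a poor torus, or $T$ is a point and $H=H_1$ is a poor irreducible holomorphic symplectic surface, i.e.\ a $K3$ surface. In the second case $G$ acts freely on a $K3$. Since $\chi(\mathcal O_{K3})=2$, we get $\chi(\mathcal O_X)=2/|G|$, which must be an integer. Moreover $\chi(\mathcal O_X)\le 2$, because $h^{1,0}(X)\le h^{1,0}(K3)=0$ and $h^{2,0}(X)\le 1$. So $|G|\in\{1,2\}$.

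If $|G|=2$, then $X$ is an Enriques surface. Enriques surfaces are projective, so they contain curves, i.e.\ divisors, and are not poor. Hence $G$ is trivial and $X$ is a poor $K3$.

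\textbf{Step 4: dimension $3$.} The possibilities are $T$ of dimension $3$ with $H$ a point, or $T$ an elliptic curve with $H$ a $K3$ surface. The second is impossible because a poor torus has positive dimension at least $2$: an elliptic curve has points as divisors, so it is not poor.

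More generally, I would check that in Theorem \ref{thm:intro-main} the torus $T$ is either a point or of dimension at least $2$. One-dimensional $T$ must be excluded directly, since $E\times H$ contains the divisor $\{p\}\times H$, and this descends to a divisor on the quotient because $G$ is finite. Hence $X=T/G$ with $T$ a $3$-dimensional poor torus.

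\textbf{Main obstacle.} The delicate step is the surface case: ruling out free quotients of $K3$ surfaces. I would use the holomorphic Euler characteristic argument above, which is routine. Beyond that, the key check is that Theorem \ref{thm:intro-main} really forces each factor $T$ and $H_i$ to be poor individually; the statement as given asserts this.
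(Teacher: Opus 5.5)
Your proposal is correct and follows essentially the paper's route: rule out $\kappa(X)=-\infty$ by uniruledness (H\"oring--Peternell in dimension $3$, the Enriques--Kodaira classification for surfaces), apply Theorem~\ref{thm:intro-main}, and do the dimension count, using that a poor torus cannot be an elliptic curve. The only variation is that you exclude nontrivial free quotients of a poor $K3$ via $\chi(\mathcal O_X)=2/|G|$ and the projectivity of Enriques surfaces, whereas the paper gets $G$ trivial from Lemma~\ref{lemma:aut}, which rests on the same underlying fact (projectivity of Enriques manifolds, via Oguiso--Schr\"oer).
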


We also record the elementary permanence property of poor manifolds that will
be used repeatedly.

\begin{lemma}[Lemma 4.2 of \cite{bandman2023simple}]
\label{lemma:lem0}
Let \(X,Y\) be compact connected complex manifolds and let
\(f:X\to Y\) be a surjective holomorphic map. Assume that \(Y\) is poor.
Then:
\begin{itemize}
    \item if every fiber \(F_y=f^{-1}(y)\) is finite, then \(X\) is poor;
    \item if every fiber \(F_y\) is poor and
    \[
    \dim F_y=\dim X-\dim Y,
    \]
    then \(X\) is poor.
\end{itemize}
In particular, a product of poor manifolds is poor.
\end{lemma}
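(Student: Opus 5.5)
We argue by contradiction, treating the two kinds of obstruction separately: a rational curve in \(X\) and a codimension-one analytic subset of \(X\). In each case we push the object forward to \(Y\) by \(f\) and use that \(Y\) is poor.

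\emph{Rational curves.} Let \(g:\mathbb P^1\to X\) be non-constant.
\begin{enumerate}
\item If \(f\circ g\) is non-constant, its image is a rational curve in \(Y\), which is impossible.
\item So \(f\circ g\) is constant, and \(g(\mathbb P^1)\) lies in a single fiber \(F_y\). Under the first hypothesis \(F_y\) is finite, while \(g(\mathbb P^1)\) is a connected curve, which is impossible.
\item Under the second hypothesis, \(g\) is a non-constant map \(\mathbb P^1\to F_y\) with \(F_y\) poor. This is impossible as long as "poor" makes sense for \(F_y\). I will read the hypothesis as saying each fiber is a compact manifold (or at least a reduced compact space to which the definition applies), taking the reduction if necessary.
\end{enumerate}

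\emph{Divisors.} Let \(Z\subset X\) be an irreducible closed analytic subset of codimension \(1\). By Remmert's proper mapping theorem, \(f(Z)\) is an irreducible analytic subset of \(Y\).
\begin{enumerate}
\item In the finite-fiber case, \(f|_Z\) has finite fibers, so \(\dim f(Z)=\dim Z=\dim X-1\). Since \(f\) is surjective with finite fibers, \(\dim X=\dim Y\). Hence \(f(Z)\) is a divisor in \(Y\), a contradiction.
\item In the second case, set \(n=\dim X\), \(m=\dim Y\), \(d=n-m\). There are two possibilities.
\begin{enumerate}
\item If \(f(Z)\neq Y\), then \(\dim f(Z)\le m-2\), because \(Y\) has no divisors. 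Every fiber of \(f|_Z\) sits inside a fiber of \(f\), so it has dimension at most \(d\). This gives \(\dim Z\le (m-2)+d=n-2\), a contradiction.
\item If \(f(Z)=Y\), the generic fiber of \(f|_Z\) has dimension \(n-1-m=d-1\). For general \(y\), the intersection \(Z\cap F_y\) is a nonempty analytic subset of \(F_y\) of pure dimension \(d-1\) near generic points. If \(d\ge 2\), then \(Z\cap F_y\) is a divisor in the poor manifold \(F_y\), a contradiction.
\end{enumerate}
\end{enumerate}
The equidimensionality hypothesis \(\dim F_y=d\) for every \(y\) is what drives the dimension count in the case \(f(Z)\neq Y\).

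\emph{Small fiber dimension.} If \(d=1\), the fibers are poor of dimension \(1\), which cannot happen since no curve is poor. So the second hypothesis forces \(d\ge2\), or \(d=0\). When \(d=0\) the fibers are finite, which is the first case.

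\emph{Main obstacle.} The delicate step is the surjective case \(f(Z)=Y\). Upper semicontinuity of fiber dimension applied to \(f|_Z\) gives only a lower bound \(\ge d-1\) on every component of \(Z\cap F_y\). To get a genuine divisor I will use two facts:
\begin{itemize}
\item every fiber of \(f|_Z\) has dimension at least \(\dim Z-\dim f(Z)=d-1\);
\item \(Z\cap F_y\) cannot be all of \(F_y\) for general \(y\), since otherwise \(Z\) would contain a dense set of fibers and hence equal \(X\).
\end{itemize}
Hence a component of \(Z\cap F_y\) of dimension exactly \(d-1\) exists inside the \(d\)-dimensional fiber \(F_y\) (working with the reduced structure). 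This contradicts poorness of \(F_y\).

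\emph{Products.} For the final clause, apply the second bullet to the projection \(X\times Y\to Y\). Every fiber is a copy of \(X\), which is poor and of dimension \(\dim(X\times Y)-\dim Y\), so \(X\times Y\) is poor.
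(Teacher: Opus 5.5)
Your proof is correct. The paper does not prove this lemma; it quotes it from \cite{bandman2023simple}, so there is no in-paper proof to compare with. Your argument is the natural self-contained one: push a rational curve or a divisor forward by $f$, apply Remmert's proper mapping theorem, and count dimensions. The equidimensionality hypothesis is used exactly where it is needed, to rule out the case $f(Z)\neq Y$.

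Two steps in the case $f(Z)=Y$ can be stated more cleanly.

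\emph{Source of the lower bound.} The bound $\dim_z(Z\cap F_{f(z)})\ge \dim Z-\dim Y=d-1$ holds at every $z\in Z$. It does not come from upper semicontinuity of fiber dimension. It holds because $Y$ is smooth, so $Z\cap F_y$ is cut out locally in the pure $(n-1)$-dimensional space $Z$ by $m$ holomorphic equations.

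\emph{Choosing the fiber.} You do not need the genericity or ``dense set of fibers'' step to find a fiber with $\varnothing\neq Z\cap F_y\neq F_y$. Take any $x\notin Z$ and set $y=f(x)$, so $F_y\not\subset Z$. Then $Z\cap F_y\neq\varnothing$ because $f(Z)=Y$. Since $F_y$ is a connected manifold of dimension $d$, it is irreducible. Hence every component of the proper analytic subset $Z\cap F_y$ has dimension at most $d-1$, and by the lower bound exactly $d-1$. This is the forbidden codimension-one subset of the poor fiber $F_y$.
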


We next turn to \(K3\) surfaces. Although the fact that a very general
\(K3\) surface is poor is classical, we give a precise period-domain
description of all poor \(K3\) surfaces. Let
\[
\mathcal P:\mathcal T_\Lambda\longrightarrow Q_\Lambda
\]
be the period map from the moduli space of marked \(K3\) surfaces to the
period domain. By the global Torelli theorem, the period point determines the
underlying \(K3\) surface up to biholomorphism, modulo the usual
inseparability phenomena. Thus, to classify poor \(K3\) surfaces, it is enough
to determine which period points correspond to poor \(K3\) surfaces.

We define a subset
\[
\mathcal U\subset Q_\Lambda
\]
consisting of the period points corresponding to poor \(K3\) surfaces. We show
that
\[
\mathcal U
=
Q_\Lambda
\setminus
\bigcup_{\substack{\delta\in\Lambda\\ \delta^2\geq -2}}
H_\delta,
\]
where
\[
H_\delta=\{[\omega]\in Q_\Lambda\mid (\omega,\delta)=0\}.
\]
In particular, \(\mathcal U\) is dense in \(Q_\Lambda\), but has empty
interior. We also study the intersection of \(\mathcal U\) with a fixed
hyperplane \(H_\delta\) when \(\delta^2<-2\), and show that poor period points
are dense in such a hyperplane but again have empty interior. Putting these results together, we obtain the following classification in
dimensions at most \(3\).

\begin{theorem}
Let \(X\) be a compact K\"ahler manifold of dimension at most \(3\). Then:
\begin{description}
    \item[\textup{(1)}] If \(\dim_{\mathbb C}X=2\), then \(X\) is poor if and
    only if one of the following holds:
    \begin{itemize}
        \item \(X\) is a \(K3\) surface such that, for any marking
        \[
        \phi:H^2(X,\mathbb Z)\longrightarrow \Lambda,
        \]
        one has
        \[
        \mathcal P([(X,\phi)])\in \mathcal U;
        \]
        \item \(X\) is a finite \'etale quotient of a complex torus of
        algebraic dimension \(0\).
    \end{itemize}

    \item[\textup{(2)}] If \(\dim_{\mathbb C}X=3\), then \(X\) is poor if and
    only if it is a finite \'etale quotient of a complex torus of algebraic
    dimension \(0\).
\end{description}
\end{theorem}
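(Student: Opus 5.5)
We combine the reduction to non-negative Kodaira dimension, Theorem~\ref{thm:intro-main}, and the period-domain description of poor $K3$ surfaces, and treat the two directions separately.

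\emph{Forward direction.} Let $X$ be poor of dimension $2$ or $3$. The first step is to show $\kappa(X)\neq-\infty$.
\begin{itemize}
\item In dimension $3$ this follows from Theorem~\ref{theorem:-infty}: if $\kappa(X)=-\infty$, then $X$ is uniruled, so it contains rational curves, contradicting poorness.
\item In dimension $2$ we argue from the Enriques--Kodaira classification. A K\"ahler surface with $\kappa=-\infty$ is ruled or rational, and hence contains rational curves.
\end{itemize}
Theorem~\ref{thm:intro-main} then gives $X\cong (T\times H)/G$ with $T$ a poor torus and $H$ a product of poor irreducible holomorphic symplectic manifolds $H_i$. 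Each $H_i$ has even dimension at least $2$, so we split by dimension.
\begin{itemize}
\item If $\dim X=2$, either $H$ is a point, or $T$ is a point and $H$ is a $K3$ surface. In the second case $G$ acts freely on the $K3$. A $K3$ surface has holomorphic Euler characteristic $\chi(\mathcal O)=2$, so a free quotient of order $|G|$ has $\chi=2/|G|$, which forces $|G|\le 2$. An order-two quotient is an Enriques surface, which is projective and so contains curves. Hence $G$ is trivial and $X$ is a poor $K3$.
\item If $\dim X=3$, a nontrivial $H$ would be a $K3$ factor, which forces $\dim T=1$. But a poor torus cannot have dimension $1$, since points are divisors on curves. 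So $H$ is a point and $X=T/G$.
\end{itemize}
Finally, a torus is poor if and only if it has algebraic dimension $0$ \cite{bandman2021bimeromorphic}. For a $K3$ surface, poorness means $\mathcal P([(X,\phi)])\in\mathcal U$ for every marking, by the definition and characterization of $\mathcal U$ above.

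\emph{Converse.} Let $X=T/G$ with $G$ acting freely and $a(T)=0$. Then $T$ is poor, and the quotient map $T\to X$ is a finite surjection. An irreducible divisor $D\subset X$ would pull back to a divisor on $T$. A rational curve in $X$ lifts to $T$, since $\mathbb P^1$ is simply connected and the map is an unramified cover. Either case contradicts the poorness of $T$; alternatively, this is the converse part of Theorem~\ref{thm:intro-main}. For a $K3$ surface whose period lies in $\mathcal U$, poorness is exactly the defining property of $\mathcal U$, so nothing further is needed.

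\emph{Main obstacle.} I expect the hardest step to be excluding $\kappa=-\infty$ in dimension $2$ for non-algebraic K\"ahler surfaces, and checking that surfaces of class VII or other non-K\"ahler cases do not arise. Here the K\"ahler hypothesis and the classification of surfaces are essential. The remaining steps reduce to Theorem~\ref{thm:intro-main} and dimension counting.
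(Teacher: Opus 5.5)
Your proposal is correct and follows essentially the paper's route. The paper's one-line proof likewise combines Theorem~\ref{thm:main} with three inputs: H\"oring--Peternell in dimension $3$, or the surface classification in dimension $2$, to secure $\kappa(X)\neq-\infty$; Theorem~\ref{theorem:KP} for the $K3$ case; and the fact that a torus is poor iff $a(T)=0$. You make explicit the steps the paper leaves implicit, and your $\chi(\mathcal O)$/Enriques argument for why $G$ must be trivial on a $K3$ factor is a hands-on substitute for Lemma~\ref{lemma:aut}.
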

We dedicate the final section to make progress towards the classification in dimension $4$. To do that we first prove the following proposition which goes beyond K\"ahler manifolds and dimension $4$.

\begin{proposition}[Key rigidity in arbitrary dimension]
\label{prop:key-rigidity-nodiv-all-dim}
Let \(X\) be a compact complex manifold of dimension \(n\geq 3\), and assume
that \(X\) has no irreducible analytic subsets of codimension \(1\). Assume that
there exist two linearly independent holomorphic \((n-1)\)-forms
\[
\eta_1,\eta_2\in H^0(X,\Omega_X^{n-1}).
\]
Define
\[
W:=
\Bigl\{
\beta\in H^0(X,\Omega_X^1\otimes K_X):
\beta\wedge\eta_1=0
\ \text{and}\
\beta\wedge\eta_2=0
\Bigr\}.
\]
Then the following hold:
\begin{enumerate}
    \item if \(n=3\), then
    \[
    \dim_{\mathbb C} W\leq 1;
    \]
    \item if \(n\geq 4\) and
    \[
    K_X^{\otimes(n-3)}\not\simeq \mathcal O_X,
    \]
    then
    \[
    \dim_{\mathbb C} W\leq n-3.
    \]
\end{enumerate}
\end{proposition}
Using this we prove the following theorem.
\begin{theorem}\label{thm:main theorem}
    Let $X$ be a simply connected compact K\"ahler manifold of dimension $4$, assume that 
    \begin{description}
        \item[1] $X$ has no codimension one sub varieties;
        \item[2] $K_X$ is semipositive;
        \item[3] $\dim_{\mathbb{c}}H^4(TX)$ is greater or equal to $2$.
    \end{description}
    Then $\kappa(X)\geq 0$.
\end{theorem}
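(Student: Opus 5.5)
The plan is to argue by contradiction: assume \(\kappa(X)=-\infty\) and use Proposition \ref{prop:key-rigidity-nodiv-all-dim} in dimension \(n=4\). First I translate hypothesis \textbf{3} into holomorphic forms. By Serre duality,
\[
H^4(X,TX)\cong H^0(X,\Omega^1_X\otimes K_X)^\vee,
\]
so \(\dim_{\mathbb C}H^0(X,\Omega^1_X\otimes K_X)\geq 2\). I will also use the natural isomorphism \(\Omega^{3}_X\cong TX\otimes K_X\), given by contraction with a local generator of \(K_X\). Together with the wedge pairing \(\Omega^1_X\otimes\Omega^3_X\to K_X\), this lets me view sections of \(\Omega^1_X\otimes K_X\) and holomorphic \(3\)-forms as mutually "dual" objects.

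Since \(X\) is simply connected, \(K_X^{\otimes 1}=K_X\not\simeq\mathcal O_X\) as soon as \(\kappa(X)=-\infty\), because \(\mathcal O_X\) has \(\kappa=0\). This is exactly the hypothesis of Proposition \ref{prop:key-rigidity-nodiv-all-dim}(2) with \(n-3=1\). Hypothesis \textbf{1} supplies the absence of divisors.

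The core step is to produce two linearly independent holomorphic \(3\)-forms \(\eta_1,\eta_2\) such that the space \(W\) they cut out has dimension at least \(2\); this contradicts the bound \(\dim W\leq 1\). Here I use hypothesis \textbf{2}: \(K_X\) semipositive means it carries a smooth hermitian metric with semipositive curvature. A Bochner-type argument (in the style of Demailly--Peternell--Schneider) then gives that holomorphic sections of \(\Omega^1_X\otimes K_X\) are parallel in the appropriate sense. In particular, each nonzero \(\beta\in H^0(\Omega^1_X\otimes K_X)\) has no zeros. If \(\beta\) had zeros, its zero locus would either be a divisor, which is excluded by hypothesis \textbf{1}, or have codimension \(\geq 2\); in the latter case \(c_4\)-type/Chern class considerations combined with semipositivity should force emptiness.

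With nowhere-vanishing sections \(\beta_1,\beta_2\) in hand, I contract each against the rank-one kernel structure to build \(3\)-forms. Concretely, \(\beta_1\wedge\beta_2\) is a section of \(\Omega^2_X\otimes K_X^2\). I then wedge with suitable forms, or use \(\Omega^3_X\cong TX\otimes K_X\) applied to a dual vector field, to obtain \(\eta_1,\eta_2\in H^0(\Omega^3_X)\) with \(\beta_i\wedge\eta_j=0\) for all \(i,j\). Linear independence of the \(\eta_j\) should follow from that of the \(\beta_i\), using nonvanishing. Then \(\beta_1,\beta_2\in W\), so \(\dim W\geq 2>1=n-3\), a contradiction, and hence \(\kappa(X)\geq 0\).

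The main obstacle is this construction of the \(3\)-forms \(\eta_j\) annihilated by the \(\beta_i\). A nonzero holomorphic \(3\)-form need not exist when \(\kappa=-\infty\), since \(h^{3,0}\) can vanish. I would first try to exploit semipositivity more strongly: flat directions of the metric on \(K_X\) should give a splitting of \(TX\), or of \(\Omega^1_X\otimes K_X\), from which \(3\)-forms can be assembled. Failing that, I would try to show directly, from the Bochner identity, that \(K_X\) is numerically trivial. Then \(\pi_1=1\) gives \(K_X\simeq\mathcal O_X\) (since \(H^1(X,\mathcal O_X)=0\)), and so \(\kappa(X)=0\) immediately.
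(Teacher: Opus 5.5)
Your overall frame matches the paper's: argue by contradiction, use Serre duality to turn hypothesis 3 into $\dim_{\mathbb C}H^0(X,\Omega^1_X\otimes K_X)\geq 2$, and contradict the bound $\dim_{\mathbb C}W\leq 1$ of Proposition~\ref{prop:key-rigidity-nodiv-all-dim} with $n=4$. (For $K_X\not\simeq\mathcal O_X$ you need only $\kappa(X)=-\infty$, not simple connectivity.) The gap is the step you yourself call the core: producing two independent holomorphic $3$-forms with $\dim W\geq 2$. You never establish it, and the route you sketch does not work.

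\emph{The Bochner step.} A Bochner argument makes holomorphic sections parallel only under seminegative mean curvature. Semipositivity of $K_X$ pushes $\Omega^1_X\otimes K_X$ the opposite way: even for a K\"ahler metric with $\operatorname{Ric}\leq 0$ given by Yau's theorem, both $\Omega^1_X$ and $K_X$ get semipositive mean curvature. So nothing forces sections of $\Omega^1_X\otimes K_X$ to be parallel or zero-free, and the ``$c_4$-type considerations'' are not an argument.

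\emph{The construction of the $\eta_j$.} It does not type-check. $\beta_1\wedge\beta_2$ is a section of $\Omega^2_X\otimes K_X^{\otimes 2}$. Reaching $H^0(X,\Omega^3_X)\simeq H^0(X,T_X\otimes K_X)$ would need an extra holomorphic tensor carrying negative powers of $K_X$, and metric duality is not holomorphic.

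\emph{The fallback.} Showing $K_X$ numerically trivial just restates the goal: with no divisors and $\pi_1(X)=1$, $\kappa(X)\geq 0$ amounts to $K_X\simeq\mathcal O_X$. You supply no mechanism for it.

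The paper obtains the $3$-forms from hypothesis 2, which enters only through the Demailly--Peternell--Schneider dichotomy, Theorem~\ref{thm:use}: either $a(X)>0$ or $\chi(X,\mathcal O_X)=0$.
\begin{enumerate}
\item Having no divisors forces $a(X)=0$ (Lemma~\ref{lem:meromorphic-functions-constant}), so $\chi(X,\mathcal O_X)=0$.
\item Simple connectivity gives $h^{1,0}=0$; this is its real role. Also $\kappa(X)=-\infty$ gives $h^{4,0}=0$. Hence $h^{3,0}=1+h^{2,0}$.
\item $h^{2,0}\geq 1$, since otherwise $X$ would be projective by Kodaira's criterion and would contain a divisor.
\end{enumerate}
Thus $h^{3,0}\geq 2$, which settles your worry that $3$-forms might not exist.

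You also need no $3$-forms adapted to the $\beta_i$. For any $\beta\in H^0(X,\Omega^1_X\otimes K_X)$ and $\eta\in H^0(X,\Omega^3_X)$, one has $\beta\wedge\eta\in H^0(X,K_X^{\otimes 2})=0$, because $\kappa(X)=-\infty$. So for any two independent $\eta_1,\eta_2$, $W=H^0(X,\Omega^1_X\otimes K_X)$, which has dimension at least $2$ by hypothesis 3. This contradicts the Proposition.
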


The theorem in sprit is related to the results in \cite{horing-lazic-lehn2025nonvanishing}. The conclusions are only different in case of $\chi(X,\mathcal{O}_X)=0$.

\section*{Notation} 
We use $\mathbb{R}$, $\mathbb{C}$, $\mathbb{Q}$, and $\mathbb{Z}$ to denote the real numbers, complex numbers, rational numbers, and integers, respectively. We denote the tangent bundle of $X$ by $TX$, the cotangent bundle by $\Omega^1_X$, and the canonical line bundle by $K_X$ (throughout the text, we deal with K\"ahler manifolds without Weil divisors, so we treat $K_X$ as a line bundle). By $h^{p,q}$ we denote the dimension of $H^{p,q}(X,\mathbb{C})$. For any compact complex manifold $X$, we denote the space of closed holomorphic one-forms by $H^0(X,\Omega^1_X)_{cl}$. We denote the automorphism group of $X$ by $\operatorname{Aut}(X)$. For an automorphism $f \in \operatorname{Aut}(X)$, we denote its fixed-point set by $\operatorname{Fix}(f)$. By \emph{very general}, we mean outside a countable union of proper subvarieties. By \emph{general}, we mean outside a finite union of proper subvarieties. We denote the Kodaira dimension of $X$ by $\kappa(X)$.

\section{General Classification}
\subsection{Preliminaries}
Throughout this section, we will be studying certain types of complex manifolds and their automorphism groups. Let us begin by recalling some key definitions.
\begin{definition} \label{definition:IHS}
    A compact K\"ahler manifold $X$ is said to be IHS (Irreducible Holomorphic Symplectic) if it is simply connected, $H^2(X,\mathcal{O}_X) \cong H^{0,2}(X)$ is one dimensional and $H^{2,0}(X)$ is generated by nowhere vanishing holomorphic symplectic $2$ form $\sigma$.
\end{definition}
This definition is motivated by the Beauville–Bogomolov decomposition theorem. Basic results about IHS manifolds can be found in \cite{huybrechts1997compact} and \cite{HuybrechtsErratum}. Note that the two-dimensional IHS manifolds are precisely $K3$ surfaces. Since $\sigma$ is a holomorphic symplectic form on $X$, it is everywhere nonzero and nondegenerate at every point of $X$. Hence, for each point $x\in X$, the bilinear pairing
\[
T_xX \times T_xX \to \mathbb{C}, \qquad (v,w)\mapsto \sigma_x(v,w)
\]
is skew-symmetric and nondegenerate. Therefore, contraction with $\sigma$ defines an isomorphism on each fiber
\[
T_xX \longrightarrow \Omega^1_{X,x}, \qquad v \longmapsto \iota_v(\sigma_x)=\sigma_x(v,\cdot).
\]
Since this construction varies holomorphically with $x$, it gives an isomorphism of holomorphic vector bundles, equivalently of locally free sheaves,
\[
TX \xrightarrow{\ \sim\ } \Omega^1_X.
\]

Passing to global sections, we obtain an induced isomorphism
\[
H^0(X,TX)\xrightarrow{\ \sim\ } H^0(X,\Omega^1_X).
\]
By assumption, $H^0(X,\Omega^1_X)=0$. It follows immediately that
\[
H^0(X,TX)=0.
\]
In other words, $X$ admits no nonzero global holomorphic vector fields. The existence of the symplectic form $\sigma \in H^0(X, \Omega^2_X)$ ensures that $\dim(X) = 2n$ is even and that the canonical sheaf $K_X = \mathcal{O}_X$ is trivial. Moreover, the algebra of holomorphic forms $\bigoplus_p H^0(X, \Omega^p_X)$ is generated by the symplectic form, yielding the following Hodge numbers:
\[
h^{p,0}(X) = 
\begin{cases} 
1 & \text{if } p \text{ is even}, \\
0 & \text{if } p \text{ is odd},
\end{cases}
\]
and consequently $\chi(\mathcal{O}_X) = n + 1$.
\begin{definition}
    An Enriques manifold is a connected complex manifold $Y$ that is not simply connected and whose universal cover $X$ is a IHS.
\end{definition}
Enriques manifolds $Y$ are compact, have even dimension $\dim(Y) = 2n$, and possess finite fundamental groups. We recall the following result from \cite{OguisoSchrer+2011+215+235}:
\begin{lemma}\label{lem:os11}
    Every Enriques Manifold is projective and the same holds for its universal cover.
    \begin{proof}
        \cite{OguisoSchrer+2011+215+235} (Corollary 2.7).
    \end{proof}
\end{lemma}
Throughout the text, our main source of codimension 1 subvarieties comes from line bundles. Therefore, we prove the following.
\begin{lemma}\label{lem:lemma1}
    Let $X$ be a compact and connected complex manifold of dimension $n$ without any codimension $1$ subvariety and $L$ be any nontrivial line bundle over $X$, then the only possible global sections of the line bundle is 0.
\end{lemma}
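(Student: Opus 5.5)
We argue by contradiction. Suppose $s\in H^0(X,L)$ is a nonzero section of a nontrivial line bundle $L$ on $X$. The goal is to show that $s$ must vanish nowhere, which will force $L\cong\mathcal O_X$.

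\textbf{Step 1: the zero locus.} Since $X$ is connected and $s\not\equiv 0$, the identity theorem shows that $s$ does not vanish identically on any nonempty open subset. Let $Z(s)\subset X$ be its zero set. Locally, on a trivializing open set $U$ with $L|_U\cong\mathcal O_U$, the section $s$ is a single holomorphic function $f_U$ that is not identically zero. Hence $Z(s)\cap U=\{f_U=0\}$, which is either empty or a hypersurface.

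\textbf{Step 2: purity of dimension.} The key input is the classical fact that the zero set of a single nonzero holomorphic function on a connected $n$-dimensional complex manifold is either empty or of pure dimension $n-1$. This holds because every irreducible component of $\{f=0\}$ through a point has codimension exactly one, by the local parametrization theorem or Krull's principal ideal theorem in $\mathcal O_{X,x}$. Consequently $Z(s)$ is a closed analytic subset of $X$ which, if nonempty, has pure dimension $n-1$. It is therefore a codimension $1$ subvariety in the sense of the definition above, which contradicts the hypothesis. So $Z(s)=\emptyset$.

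\textbf{Step 3: conclusion.} A nowhere-vanishing section $s$ gives a morphism $\mathcal O_X\to L$, $1\mapsto s$, that is an isomorphism on every fiber. Hence $L\cong\mathcal O_X$, contradicting the nontriviality of $L$. Therefore $H^0(X,L)=0$.

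The only step with any substance is the purity statement in Step 2. Everything else is formal. I would cite it as a standard result on analytic hypersurfaces rather than reprove it. If needed, it follows from the Weierstrass preparation theorem: after a linear change of coordinates, $f$ is a unit times a Weierstrass polynomial in $z_n$, so its zero set is a finite branched cover of a neighbourhood in $\mathbb C^{n-1}$.
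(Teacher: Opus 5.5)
Your proof is correct and follows the same approach as the paper: a nonzero section of a nontrivial line bundle must vanish somewhere, and its zero locus is then a codimension-one subvariety. You supply the details the paper treats as obvious, namely the pure $(n-1)$-dimensionality of the zero set of a single nonzero holomorphic function, and the fact that a nowhere-vanishing section trivializes $L$.
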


\begin{proof}
      Proof is obvious because, if $L$ is nontrivial and has a section, it's vanishing locus will give us a codimension $1$ subvariety. 
\end{proof}
\begin{lemma}\label{Lemma: prodp}
    Let, $X_i$ are compact complex manifolds, for $i$ inside some finite set $F$. Then if
    \[
    X = \prod_{i\in F} X_i
    \]
    is poor, then each $X_i$ is poor.
\end{lemma}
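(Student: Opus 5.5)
We argue by contraposition: assuming some factor \(X_j\) is not poor, we produce either a codimension-one analytic subset of \(X\) or a rational curve in \(X\). Write
\[
X = X_j \times Y, \qquad Y:=\prod_{i\in F\setminus\{j\}} X_i,
\]
so that \(X\) is compact and connected, hence so are \(X_j\) and \(Y\). For each \(i\in F\) let \(p_i:X\to X_i\) denote the projection. There are two ways \(X_j\) can fail to be poor, and we treat them separately.

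\textbf{Case 1: \(X_j\) has a codimension-one subvariety.} Suppose \(Z\subset X_j\) is a closed analytic subset with \(\dim Z=\dim X_j-1\). Then \(p_j^{-1}(Z)=Z\times Y\) is a closed analytic subset of \(X\), because it is the preimage of an analytic set under a holomorphic map. Its dimension is
\[
\dim Z+\dim Y=\dim X-1,
\]
using that the dimension of a product of analytic spaces is the sum of the dimensions; one can check this locally at a smooth point of a top-dimensional component of \(Z\). So \(X\) contains a codimension-one subvariety and is not poor.

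\textbf{Case 2: \(X_j\) contains a rational curve.} Suppose \(f:\mathbb P^1\to X_j\) is a nonconstant holomorphic map. Choose any point \(y_0\in Y\); this is possible since \(Y\) is nonempty, being a product of compact connected manifolds. Then
\[
t\mapsto (f(t),y_0)
\]
is a holomorphic map \(\mathbb P^1\to X\). It is nonconstant because its composite with \(p_j\) is \(f\), which is nonconstant. Its image is therefore a rational curve in \(X\), so \(X\) is not poor.

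Combining the two cases gives the lemma. There is no real obstacle. The only point to check is the dimension count in Case 1, namely \(\dim(Z\times Y)=\dim Z+\dim Y\), which is standard. The degenerate case where some \(X_i\) is a point is harmless, since then \(Y\) is still nonempty and all the dimension counts still hold.
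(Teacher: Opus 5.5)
Your proof is correct and is essentially the paper's argument. You contrapose, pull back a codimension-one subset $Z\subset X_j$ to $Z\times Y$, and place a rational curve in a slice $X_j\times\{y_0\}$; you also spell out details the paper leaves implicit (connectedness of the factors, $Y\neq\varnothing$, the dimension count), and these are all fine.
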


\begin{proof}
    Suppose for a contradiction that $X_{k}$ is not poor, suppose that it contains a rational curve, then $X$, will contain it. If instead it contains a codimension $1$ subvariety say $D$, then 
    \[
    D' := \prod_{i\in F \setminus k} X_i \times D
    \]
    will be a codimension 1 subvariety inside $X$, in either case we have a contradiction.
\end{proof}
The following result is due to A. Beauville.
\begin{theorem} \label{proposition : prop1}
    Let $T$ be a complex torus and $H_1, \dots, H_n$ IHS. Partition the indices by isomorphism type: for each class $[K]$, set
    \[
        I_{[K]} = \{\, i \mid H_i \simeq K \,\} \quad \text{and} \quad m_{[K]} = |I_{[K]}|.
    \]
    Then
    \[
        \operatorname{Aut}\!\left(T \times \prod_{i=1}^n H_i\right)
        \cong
        \operatorname{Aut}(T) \times \prod_{[K]} \left( \operatorname{Aut}(K)^{m_{[K]}} \rtimes {S}_{m_{[K]}} \right).
    \]
    In particular, if $H_i$ are pairwise non-isomorphic, then
    \[
        \operatorname{Aut}\!\left(T \times \prod_{i=1}^n H_i\right) \cong \operatorname{Aut}(T) \times \prod_{i=1}^n \operatorname{Aut}(H_i).
    \]
\end{theorem}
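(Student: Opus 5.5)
The plan is to reduce the statement to a rigidity result for automorphisms of products: every automorphism of \(X:=T\times \prod_i H_i\) permutes the factors. The key input is that an IHS manifold \(H\) has \(H^0(H,TH)=0\) and \(H^1(H,\mathcal O_H)=0\), which were established just before the statement. A torus, by contrast, has trivial tangent bundle and \(h^{1,0}=\dim T\). Write \(H=\prod_i H_i\). For \(f\in\operatorname{Aut}(X)\), I will first treat the torus factor. Consider the Albanese map of \(X\). Since \(h^{1,0}(H_i)=0\) and \(\pi_1(H_i)=1\), the Albanese of \(X\) is \(T\) and the Albanese map is the projection \(p_T\). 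By functoriality there is \(\bar f\in\operatorname{Aut}(T)\) with \(p_T\circ f=\bar f\circ p_T\). Equivalently, \(f(t,h)=(\bar f(t),\,g(t,h))\), where \(g_t:=g(t,\cdot)\) is an automorphism of \(H\) for every \(t\).

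Next I will show that \(g_t\) does not depend on \(t\). The family \(t\mapsto g_t\) is a holomorphic map from the connected space \(T\) to \(\operatorname{Aut}(H)\). Because \(H^0(H,TH)=\bigoplus_i p_i^*H^0(H_i,TH_i)=0\) by the Künneth formula, \(\operatorname{Aut}(H)\) is discrete. Hence \(g_t=g\) is constant and \(f=\bar f\times g\). This also gives the converse direction: any \(\phi\times\psi\) with \(\phi\in\operatorname{Aut}(T)\) and \(\psi\in\operatorname{Aut}(H)\) is an automorphism of \(X\). This proves \(\operatorname{Aut}(X)=\operatorname{Aut}(T)\times\operatorname{Aut}(H)\).

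It remains to compute \(\operatorname{Aut}\bigl(\prod_i H_i\bigr)\). Take \(g\in\operatorname{Aut}(H)\) and consider its action on \(H^{2,0}(H)=\bigoplus_i \mathbb C\,p_i^*\sigma_i\); the Künneth formula gives this decomposition because \(h^{1,0}(H_i)=0\). The factors are recovered intrinsically from the forms \(p_i^*\sigma_i\): the kernel of \(p_i^*\sigma_i\) is the distribution tangent to \(\prod_{j\neq i}H_j\). I expect the rank of these forms to single them out up to scalar among all holomorphic \(2\)-forms, as the \(2\)-forms of \emph{minimal} rank \(\dim H_i\) whose kernels are integrable with compact leaves. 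Since \(g^*\) preserves rank and integrability, it permutes the lines \(\mathbb C\,p_i^*\sigma_i\) by some permutation \(\tau\). Therefore \(g\) maps the foliation by \(\prod_{j\ne i}H_j\)-fibres to the foliation for \(\tau(i)\). From this, \(g\) induces maps \(H_i\to H_{\tau(i)}\) on leaf spaces, which are then isomorphisms, so \(\tau\) only permutes isomorphic factors. Composing \(g\) with the inverse permutation, we may assume \(\tau=\mathrm{id}\). Then \(p_i\circ g\) factors through \(p_i\), and \(g=\prod_i g_i\). This yields \(\prod_{[K]}\operatorname{Aut}(K)^{m_{[K]}}\rtimes S_{m_{[K]}}\).

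I expect the main obstacle to be this last step: proving rigorously that an automorphism permutes the factors, especially the claim that the \(p_i^*\sigma_i\) are characterized up to scalar. A linear combination \(\sum a_i p_i^*\sigma_i\) has rank equal to the sum of \(\dim H_j\) over those \(j\) with \(a_j\neq0\), so the minimal-rank forms are exactly the multiples of the pure \(p_i^*\sigma_i\) with \(\dim H_i\) minimal. The approach is therefore to argue by induction on the dimension, peeling off the factors of smallest dimension first. Alternatively, I could argue via the holonomy/de Rham decomposition of the Ricci-flat metric, which is unique and preserved by isometries. However, automorphisms need not be isometries, so I would first try the \(2\)-form argument above.
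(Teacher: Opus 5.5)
Your argument is correct, and it takes a different route from the paper. The paper gives no proof of its own here; it simply cites Section 3 of Beauville's \emph{Some remarks on K\"ahler manifolds with $c_1=0$}. Your proof is self-contained and purely holomorphic:
\begin{itemize}
\item you split off the torus using the Albanese map of $T\times H$ and the discreteness of $\operatorname{Aut}(H)$, which comes from $H^0(H,TH)=0$;
\item you detect the permutation of the IHS factors through the rank stratification of $H^{2,0}(H)=\bigoplus_i\mathbb{C}\,p_i^*\sigma_i$.
\end{itemize}
It uses only $\pi_1(H_i)=1$, $h^{1,0}(H_i)=0$, $H^0(H_i,TH_i)=0$ and the nondegeneracy of $\sigma_i$, with no Calabi--Yau theorem and no holonomy.

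The metric route you mention as a fallback is heavier, but it is what covers the general Beauville--Bogomolov setting. Calabi--Yau factors have $H^{2,0}=0$, so your $2$-form trick has nothing to detect there. Your objection that automorphisms need not be isometries is answered in that route as follows. By K\"unneth (all $b_1=0$), the class of $g^*\omega$ is a sum of K\"ahler classes pulled back from the factors. Since $g^*\omega$ is again Ricci-flat, uniqueness in Calabi's theorem makes it a product metric. So $g$ is an isometry between two product metrics with the same irreducible factors, and uniqueness of the de Rham decomposition forces $g$ to permute the factors.

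The step you flag as the main obstacle is already settled by your rank formula; you need neither integrability of the kernels nor an induction. Put $e_i=p_i^*\sigma_i$ and $d_i=\dim H_i$.
\begin{itemize}
\item Since $g^*$ is linear, invertible and rank-preserving, it maps each cone $C_r=\{\alpha:\operatorname{rk}\alpha\le r\}$ onto itself.
\item $C_r$ is the union of the coordinate subspaces $\bigoplus_{i\in S}\mathbb{C}e_i$ with $\sum_{i\in S}d_i\le r$.
\item Every strict superset of $\{j\}$ has weight $>d_j$, so $\mathbb{C}e_j$ is an irreducible component of $C_{d_j}$.
\item Hence $g^*(\mathbb{C}e_j)$ is a one-dimensional component of $C_{d_j}$, that is, some $\mathbb{C}e_k$ with $d_k=d_j$.
\end{itemize}

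Avoid a naive peeling by ``next smallest rank''. With two $K3$ factors and an IHS fourfold, $e_1+e_2$ has rank $4$, just like the fourfold's form. So forms of the second-smallest rank need not be pure. An induction would instead have to split off $\prod_{d_i=d_{\min}}H_i$ and reuse your discreteness argument on the remaining factor.
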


\begin{proof}
    \cite[section 3]{beauville1983some}
\end{proof}
\subsection{General classification}

In this section we give the General classification result concerning poor manifolds.
\begin{lemma}\label{lemma:aut}
Let
\[
H=\prod_{i=1}^k H_i^{n_i},
\]
where $H_i$ are pairwise non-isomorphic poor irreducible holomorphic symplectic manifolds. Let
\[
G\subset \Aut(H)
\]
be a finite subgroup. If $G$ acts freely on $H$, then $G$ is trivial.
\end{lemma}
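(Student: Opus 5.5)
The quickest route is a holomorphic Lefschetz, i.e.\ Euler-characteristic, argument. Every IHS manifold $K$ of dimension $2m$ has $\chi(\mathcal O_K)=m+1$, as recorded after Definition~\ref{definition:IHS}. Since $H^{q}(Y_1\times Y_2,\mathcal O)=\bigoplus_{a+b=q} H^a(Y_1,\mathcal O)\otimes H^b(Y_2,\mathcal O)$ by the K\"unneth formula, $\chi(\mathcal O_H)=\prod_i \chi(\mathcal O_{H_i})^{n_i}$. Writing $\dim H_i=2m_i$ with $m_i\ge 1$, this gives
\[
\chi(\mathcal O_H)=\prod_{i=1}^k (m_i+1)^{n_i}\;\ge\;1 .
\]
A product of simply connected manifolds is simply connected, so $H$ is simply connected. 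Suppose $G$ acts freely on $H$. Then $H\to H/G$ is a finite unramified covering of degree $|G|$, and $H/G$ is a compact complex (indeed K\"ahler) manifold. Multiplicativity of the holomorphic Euler characteristic under finite \'etale covers (Hirzebruch--Riemann--Roch: $\chi(\mathcal O)=\int \operatorname{td}$, and the Todd class pulls back) gives
\[
\chi(\mathcal O_H)=|G|\cdot \chi(\mathcal O_{H/G}).
\]
This alone does not force $|G|=1$, so I will not rely on it alone. Instead I will use the holomorphic Lefschetz fixed point formula element by element.

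Take $g\in G$ with $g\neq 1$. Because $g$ acts freely, the holomorphic Lefschetz number $L(g,\mathcal O_H)=\sum_q (-1)^q \operatorname{tr}(g^*\mid H^q(H,\mathcal O_H))$ vanishes. I compute this trace directly. By Theorem~\ref{proposition : prop1}, $g$ permutes isomorphic factors and acts by automorphisms on each factor, and $H^*(H,\mathcal O_H)$ is, via Hodge symmetry, the conjugate of the algebra $\bigoplus_p H^0(H,\Omega^p)$. This algebra is the tensor product of the algebras $\mathbb C[\bar\sigma_j]/(\bar\sigma_j^{m_j+1})$, one for each factor $j$. The element $g$ sends $\sigma_j$ to $\lambda_j\sigma_{\pi(j)}$ for a permutation $\pi$ of the factors and scalars $\lambda_j\in\mathbb C^*$. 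The trace on a tensor product twisted by a permutation is the product over cycles of $\pi$, and each cycle contributes a trace on a single factor's algebra with the product of the $\lambda$'s along the cycle. Concretely, a cycle of length $\ell$ with total scalar $\mu$ contributes $\sum_{s=0}^{m}\mu^s$ up to the sign convention. All the cohomology sits in even degrees $q=2s$, so every sign $(-1)^q$ is $+1$.

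Hence the vanishing to establish is
\[
0 = L(g,\mathcal O_H)=\prod_{\text{cycles } c}\Bigl(\sum_{s=0}^{m_c}\mu_c^{\,s}\Bigr).
\]
So some cycle has $\mu_c$ a nontrivial root of unity of order dividing $m_c+1$. This does not immediately contradict anything, which is the main obstacle: the Lefschetz count alone is insufficient in general, as the existence of Enriques manifolds shows. This is where poorness must enter.

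For that step I would invoke Lemma~\ref{lem:os11}. Suppose $G\neq 1$ acts freely. Pick a nontrivial cyclic subgroup and restrict attention to one orbit of factors. On that orbit the relevant subgroup acts freely on the product of factors in the orbit; if it did not, a fixed point would need to be produced by a Lefschetz argument on the complementary factors. The aim is to reduce to a free action of a nontrivial finite group on a product $K^{\ell}$ with $K$ IHS. For $\ell=1$, the quotient $K/\langle g\rangle$ is an Enriques manifold, so $K$ is projective by Lemma~\ref{lem:os11}. A projective manifold of positive dimension contains an ample divisor, contradicting poorness. For $\ell>1$ with a cyclic permutation, the $\ell$-th power of $g$ preserves the factors and can be handled by the $\ell=1$ case. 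If instead that power acts trivially, then $g$ has fixed points on the diagonal-type locus $\{(x,h_1x,\dots)\}$, contradicting freeness.

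The technical heart is this reduction from a free action on the whole product to a free action on a single IHS factor. I expect it to need care, namely choosing $g$ of prime order and using the vanishing of the Lefschetz number to locate a factor where $g$ acts nonsymplectically without fixed points.
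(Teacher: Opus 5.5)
Your plan has the same architecture as the paper's proof: reduce to a single IHS factor carrying a free action, then apply Lemma~\ref{lem:os11}. But the reduction, which is the whole content of the lemma, is only announced, and the mechanism you propose for it does not work.

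The vanishing of $L(g,\mathcal O_H)$ shows only that some cycle composite $l_c$ is non-symplectic ($\mu_c\neq 1$). It says nothing about fixed points of $l_c$; non-symplectic involutions of $K3$ surfaces typically fix curves. So it cannot locate the factor to which Lemma~\ref{lem:os11} applies. No Lefschetz argument on complementary factors is relevant either. Your claim that the cyclic subgroup acts freely on ``that orbit'' is also unjustified: $\langle g\rangle$ acting freely on $H$ does not mean that every nontrivial power of $g$ is fixed-point-free on one and the same block. Your diagonal-locus observation is the right tool, but you state it only when the cycle composite is the identity.

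The missing step is the elementary fixed-point bookkeeping the paper carries out. Write $H=\prod_c K_c^{\ell_c}$ over the cycles $c$ of the permutation of factors induced by $g$. Then $\operatorname{Fix}_H(g)=\prod_c\operatorname{Fix}\bigl(g|_{K_c^{\ell_c}}\bigr)$. On a block where $g(x_1,\dots,x_\ell)=(h_\ell x_\ell,h_1x_1,\dots,h_{\ell-1}x_{\ell-1})$, set $l=h_\ell\circ\cdots\circ h_1$. The map $x\mapsto(x,h_1x,h_2h_1x,\dots)$ is a bijection from $\operatorname{Fix}(l)$ onto $\operatorname{Fix}(g|_{K^\ell})$. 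Hence if $g$ is fixed-point-free on $H$, some cycle composite $l$ is fixed-point-free on its factor.

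Combined with your prime-order idea, this finishes cleanly. If $g$ has prime order $p$, every cycle has length $1$ or $p$. On a length-$p$ block, $g^p=\mathrm{id}$ acts on the first coordinate by $l$, so $l=\mathrm{id}$ and the block has fixed points. Hence the fixed-point-free block is a single factor $K$ on which $g$ acts with order exactly $p$. Then $\langle g|_K\rangle\cong\mathbb Z/p$ acts freely, $K/\langle g|_K\rangle$ is an Enriques manifold, and $K$ is projective by Lemma~\ref{lem:os11}, contradicting poorness. This choice also tidies a point the paper passes over quickly: Lemma~\ref{lem:os11} needs $\langle l\rangle$ to act freely, not merely $l$ to be fixed-point-free.

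Alternatively, your Lefschetz computation can be completed on its own, with no fixed-point reduction, via the standard fact that an IHS manifold with a finite-order non-symplectic automorphism $l$ is projective. Since $\mu\neq 1$, the invariant space $H^2(K,\mathbb Q)^{l}$ lies in $H^{1,1}$. After tensoring with $\mathbb R$ it contains an averaged K\"ahler class. Because the K\"ahler cone is open, it then contains a rational K\"ahler class, and Kodaira's criterion applies.
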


\begin{proof}
Suppose, for a contradiction, that $G\neq \{1\}$. Choose $1\neq h\in G$. Since $G$ acts freely on $H$, we have
\[
\operatorname{Fix}(h)=\varnothing.
\]

By Theorem~\ref{proposition : prop1}, we may write
\[
h=(h_{n_1},\dots,h_{n_k}),
\qquad
h_{n_i}=(h_{n_i}^1,\dots,h_{n_i}^{n_i})\circ \sigma_i,
\]
where
\[
h_{n_i}^j\in \Aut(H_i), \qquad \sigma_i\in S_{n_i}.
\]

If every $h_{n_i}$ had a fixed point, then their product would give a fixed point of $h$ on $H$, contradicting $\operatorname{Fix}(h)=\varnothing$. Hence, there exists some $i$ such that
\[
\operatorname{Fix}(h_{n_i})=\varnothing.
\]

We claim that for some cycle
\[
c=(k_1,\dots,k_r)
\]
in the cycle decomposition of $\sigma_i$, the corresponding cycle-composition
\[
l:=h_{n_i}^{k_r}\circ \cdots \circ h_{n_i}^{k_1}\in \Aut(H_i)
\]
is fixed-point-free.

Suppose not. Then for every cycle
\[
c=(k_1,\dots,k_r)
\]
of $\sigma_i$, the automorphism
\[
h_{n_i}^{k_r}\circ \cdots \circ h_{n_i}^{k_1}
\]
has a fixed point. Fix such a cycle and choose
\[
x_{k_1}\in H_i
\]
such that
\[
h_{n_i}^{k_r}\circ \cdots \circ h_{n_i}^{k_1}(x_{k_1})=x_{k_1}.
\]
Define inductively
\[
x_{k_2}:=h_{n_i}^{k_1}(x_{k_1}),\quad
x_{k_3}:=h_{n_i}^{k_2}(x_{k_2}),\quad \dots,\quad
x_{k_r}:=h_{n_i}^{k_{r-1}}(x_{k_{r-1}}).
\]
Then
\[
h_{n_i}^{k_1}(x_{k_1})=x_{k_2},\quad
h_{n_i}^{k_2}(x_{k_2})=x_{k_3},\quad \dots,\quad
h_{n_i}^{k_r}(x_{k_r})=x_{k_1}.
\]
Thus, the tuple
\[
(x_{k_1},\dots,x_{k_r})\in H_i^r
\]
is fixed by the restriction of $h_{n_i}$ to the coordinates corresponding to the cycle $c$. Repeating this for every cycle of $\sigma_i$, we obtain a fixed point of $h_{n_i}$ on $H_i^{n_i}$, contradicting
\[
\operatorname{Fix}(h_{n_i})=\varnothing.
\]
This proves the claim.

Now, let $m=\ord(h)$. Since the cycle $c$ has length $r$, the automorphism $h^r$ acts on the $k_1$-coordinate by $l$. Hence $h^{mr}$ acts on the $k_1$-coordinate by $l^m$. But $h^{mr}=Id$, so
\[
l^m=Id.
\]
Therefore, $l$ is a finite-order fixed-point-free automorphism of $H_i$. By Lemma~\ref{lem:os11}, it follows that $H_i$ is projective. But a positive-dimensional projective manifold contains a divisor, so $H_i$ is not poor, contradicting the hypothesis. Hence $G=\{1\}$.
\end{proof}
\begin{theorem}\label{thm:main}
    Let $X$ be a poor compact K\"ahler manifold with $\kappa(X)\neq -\infty$. Then there exist a poor complex torus $T$ (possibly a point), a product $H=\prod_i H_i$ of poor compact irreducible holomorphic symplectic (IHS) manifolds (possibly a point), and a finite group $G\subset \operatorname{Aut}(T)\times\operatorname{Aut}(H)$ acting freely and holomorphically on $T\times H$ such that
    \[
    X \cong (T\times H)/G,
    \]
    and $G$ preserves the factors (and may permute isomorphic IHS factors). In particular, $G$ fits into a short exact sequence
    \[
    1 \rightarrow G_T \rightarrow G \rightarrow G_H \rightarrow 1
    \]
    where $G_T \subseteq \operatorname{Aut(T)}$ is finite acting freely holomorphically on $T$ and $G_H$ is finite subgroup of $\operatorname{Aut}(H)$. Conversely, any such free quotient $(T\times H)/G$ is poor.
\end{theorem}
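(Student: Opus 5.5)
The plan is to first show \(c_1(X)_{\mathbb R}=0\), so that Theorem~\ref{thm:theorem1} applies. Since \(\kappa(X)\ge 0\), some pluricanonical bundle \(K_X^{\otimes m}\), \(m\ge 1\), has a nonzero section \(s\). By Lemma~\ref{lem:lemma1}, poorness forces \(K_X^{\otimes m}\simeq\mathcal O_X\), because otherwise the zero locus of \(s\) would be a divisor. Hence \(K_X\) is torsion and \(c_1(X)_{\mathbb R}=0\).

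Theorem~\ref{thm:theorem1} then gives a finite unramified cover
\[
\widetilde X=T\times\prod_i H_i\times\prod_j Y_j\to X .
\]
Passing to the Galois closure, we may assume this cover is Galois with group \(G\) acting freely. By the first part of Lemma~\ref{lemma:lem0} (finite fibres), \(\widetilde X\) is poor, so by Lemma~\ref{Lemma: prodp} each factor is poor. The Calabi--Yau factors \(Y_j\), in the strict sense (\(\dim\ge 3\), \(h^{p,0}=0\) for \(0<p<\dim\)), are projective, since \(h^{2,0}=0\) makes the Kähler cone meet \(H^2(Y_j,\mathbb Q)\). A positive-dimensional projective manifold contains a divisor, so these factors cannot occur. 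Likewise \(T\) is a poor torus, i.e.\ of algebraic dimension \(0\), and each \(H_i\) is a poor IHS manifold.

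Next I would control the group. Every element of \(G\subset\operatorname{Aut}(T\times H)\) has the form \((g_T,g_H)\) by Theorem~\ref{proposition : prop1}; in particular the action preserves \(T\) and permutes only isomorphic IHS factors. Projecting to \(\operatorname{Aut}(H)\) gives \(G\to G_H\) with kernel \(G_T\). The kernel consists of elements \((g_T,\mathrm{id})\), which act freely on \(T\times H\) exactly when \(g_T\) acts freely on \(T\). This yields the exact sequence in the statement.

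One consistency check uses Lemma~\ref{lemma:aut}. If \(T\) is a point, then \(G\subset\operatorname{Aut}(H)\) acts freely, so \(G\) is trivial. I expect the main subtlety to be the Galois-closure step, together with verifying that the Calabi--Yau factors are excluded in the convention used. If ``Calabi--Yau'' only means \(h^{2,0}=0\) in dimension at least \(3\), then projectivity via Kodaira's criterion is the key point to check.

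For the converse, take a free quotient \((T\times H)/G\) of a poor torus and poor IHS manifolds. The product \(T\times H\) is poor by Lemma~\ref{lemma:lem0}. The quotient map \(\pi\) is finite étale, so:
\begin{itemize}
\item the preimage under \(\pi\) of a codimension-one analytic subset is codimension one upstairs;
\item a rational curve \(\mathbb P^1\to X\) lifts to \(T\times H\), since \(\mathbb P^1\) is simply connected and \(\pi\) is a covering.
\end{itemize}
Both contradict poorness of \(T\times H\), so the quotient is poor. Compactness, connectedness and the Kähler property all descend to the quotient.
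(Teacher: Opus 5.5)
Your proposal is correct and follows essentially the same route as the paper. Poorness forces $K_X^{\otimes m}\simeq\mathcal O_X$; Beauville--Bogomolov, together with projectivity of the Calabi--Yau factors, leaves $T\times H$; Beauville's description of $\operatorname{Aut}(T\times H)$ gives the exact sequence; and the converse is the same finite \'etale argument. Where you differ, you improve on the paper: your observation that $G_T$ consists of elements $(g_T,\mathrm{id})$ with fixed locus $\operatorname{Fix}(g_T)\times H$ proves freeness more cleanly than the paper's detour through Lemma~\ref{lemma:aut}, and your Galois-closure step makes explicit a point the paper passes over silently (it is valid because every connected finite \'etale cover of $T\times\prod_i H_i\times\prod_j Y_j$ has the form $T'\times\prod_i H_i\times\prod_j Y_j$, the non-torus factors being simply connected).
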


\begin{proof}

\begin{description}
    \item[Step 1] Consider the Kodaira dimension $\kappa(X)$. If $\kappa(X)>0$, then there exists $m>0$ with $H^0(X,K_X^{\otimes m})\neq 0$. Any nonzero section of $K_X^{\otimes m}$ must vanish somewhere unless $K_X^{\otimes m}\simeq\mathcal O_X$; hence, by Lemma~\ref{lem:lemma1}, $X$ cannot be poor. Thus $\kappa(X)\le 0$. By assumption $\kappa(X)=0$, so $K_X^{\otimes m}\simeq\mathcal O_X$ for some $m>0$; in particular $c_1(K_X)_{\mathbb R}=0$. By Theorem \ref{thm:theorem1}, there exists a finite unramified cover $\pi:\tilde X\to X$ that decomposes as
        \[
        \tilde X \;\simeq\; T \times H_1 \times \cdots \times H_r \times Y_1 \times \cdots \times Y_s,
        \]
        as in Theorem \ref{thm:theorem1}. By Lemma \ref{lemma:lem0}, poorness is preserved under finite covers, so $\tilde X$ is poor. Observe that, $Y_i$ are projective (\cite[Chapter 7, Exercise 1]{Voisin_2002},\cite{e737bc63-03cf-3413-a9ad-63fed22470ae}). Suppose some $Y_i$ appears in the product. Then it admits an ample line bundle with a (smooth) divisor $D\subset Y_i$ of codimension $1$. Therefore, no $Y_i$ occurs due to Lemma \ref{Lemma: prodp}. Set $H = \prod_iH_i$ then $T \text{ and } H$ are poor because $\tilde{X}$ is poor. So, $X \cong (T \times H)/G$, where $G \text{ is a finite subgroup of } \operatorname{Aut(T \times H)}$.
        \item[Step 2]  By Theorem \ref{proposition : prop1} we have $G \subseteq \operatorname{Aut(T)} \times \operatorname{Aut}(H)$. Set, $G_T := \operatorname{Ker}(G \xrightarrow{projection} \operatorname{Aut}(H)) \subseteq \operatorname{Aut}(T)$ and $G_H :=\operatorname{Im}(G \xrightarrow{projection} \operatorname{Aut}(H)) \subseteq \operatorname{Aut}(H)$. Then we have a short exact sequence,
        \[
        1 \rightarrow G_T \rightarrow G \rightarrow G_H \rightarrow 1.
        \]
         Now all that is left to prove is the following claim.
         \begin{claim}
             $G_T$ acts freely on $T$.
         \end{claim}
         \begin{proof}
             Suppose for a contradiction that the action of $G_T$ on $T$ is not free. Then there exists
    \[
    g\in G_T
    \]
    such that $g$ has a fixed point. By the above short exact sequence, there exists $h\in G_H$ such that
    \[
    (g,h)\in G.
    \]
   The isomorphism
   \[
   X \cong (T\times H)/G
   \]
   is induced by a finite \'etale (equivalently, unramified) covering
   \[
   \pi:T\times H \to X.
   \]
   In particular, $G$ is the group of deck transformations of $\pi$, and therefore the action of $G$ on $T\times H$ is free. Hence $(g,h)$ has no fixed points. On the other hand,
    \[
    \operatorname{Fix}(g,h)=\operatorname{Fix}(g)\times \operatorname{Fix}(h).
    \]
    Since $\operatorname{Fix}(g)\neq \varnothing$, it follows that
    \[
    \operatorname{Fix}(h)=\varnothing.
    \]
     Therefore, from Lemma \ref{lemma:aut}, $h$ is identity. This contradicts $\operatorname{Fix}(h)=\varnothing$.
         \end{proof}
         This completes the proof of one direction. For the converse, assume that $T$ and $H$ are poor. By Lemma~\ref{lemma:lem0}, the product $T\times H$ is poor. Let
\[
\pi:T\times H \longrightarrow X:=(T\times H)/G
\]
be the quotient map. Since $G$ is finite and acts freely, $\pi$ is finite \'etale.

We claim that $X$ is poor. Suppose first that $X$ contains a codimension one analytic subset $D\subset X$. Then $\pi^{-1}(D)\subset T\times H$ is a nonempty analytic subset. Since $\pi$ is finite, its fibers are zero-dimensional, and hence every irreducible component of $\pi^{-1}(D)$ has codimension one in $T\times H$. This contradicts the fact that $T\times H$ is poor.

Now suppose that $X$ contains a rational curve $C\subset X$. Let $\nu:\mathbb{P}^1\to C$ be the normalization. Consider the fiber product
\[
Y:=(T\times H)\times_X \mathbb{P}^1.
\]
Since $\pi$ is finite \'etale, the induced map $Y\to \mathbb{P}^1$ is also finite \'etale. But $\mathbb{P}^1$ is simply connected, so every finite \'etale cover of $\mathbb{P}^1$ is trivial. Thus $Y$ is a disjoint union of copies of $\mathbb{P}^1$. In particular, $T\times H$ contains a rational curve, again a contradiction.

Therefore $X$ contains neither a codimension one analytic subset nor a rational curve. Hence $X$ is poor.
\end{description}
\end{proof}
\begin{corollary}
    Let $X$ be a poor compact K\"ahler manifold with $\kappa(X)\neq -\infty$. Assume that $X$ admits a finite \'etale cover $\widetilde X\to X$ with $\widetilde X$ simply connected. Then
    \[
    X \simeq \prod H_i
    \]
    Where each $H_i$ is a IHS manifold.
\end{corollary}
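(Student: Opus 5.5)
Apply Theorem~\ref{thm:main} to write \(X\cong (T\times H)/G\), with \(T\) a poor complex torus, \(H=\prod_i H_i\) a product of poor IHS manifolds, and \(G\) finite acting freely. The quotient map \(\pi:T\times H\to X\) is a finite \'etale cover. I then compare it with the given simply connected finite \'etale cover \(\widetilde X\to X\).

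\textbf{Step 1: \(\pi_1(X)\) is finite.} Since \(\widetilde X\to X\) is finite \'etale with \(\widetilde X\) simply connected, \(\widetilde X\) is the universal cover of \(X\). Hence \(\pi_1(X)\) is finite.

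\textbf{Step 2: the torus is trivial.} The space \(T\times H\) is a connected finite \'etale cover of \(X\), so \(\pi_1(T\times H)\) is a finite-index subgroup of \(\pi_1(X)\), and therefore finite. But \(\pi_1(T\times H)=\pi_1(T)\times\pi_1(H)\), and \(\pi_1(T)\cong\mathbb Z^{2\dim T}\) is infinite unless \(T\) is a point. So \(T\) is a point.

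\textbf{Step 3: the group is trivial.} Now \(X\cong H/G\), where \(H=\prod_i H_i\) is simply connected because each \(H_i\) is. Grouping isomorphic factors gives \(H=\prod_{j} K_j^{n_j}\) with the \(K_j\) pairwise non-isomorphic poor IHS manifolds. Here \(G\subset\operatorname{Aut}(H)\) is finite and acts freely, so Lemma~\ref{lemma:aut} gives \(G=\{1\}\). Hence \(X\cong\prod_i H_i\). As a consistency check, \(H\) is then the universal cover, so \(\widetilde X\cong H\).

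\textbf{Main obstacle.} The only subtle point is Step 2. One must make sure the decomposition produced by Theorem~\ref{thm:main} uses a \emph{connected} cover \(T\times H\to X\), so that its fundamental group really embeds with finite index in \(\pi_1(X)\). This holds because \(T\times H\) is connected and \(\pi\) is a finite \'etale map, i.e.\ a finite-sheeted topological covering. Alternatively, one can avoid fundamental-group language: pull back the cover \(\widetilde X\to X\) along \(\pi\) and take a connected component. This gives a finite \'etale cover of \(T\times H\) that is also a finite \'etale cover of the simply connected \(\widetilde X\). A finite \'etale cover of a simply connected space is a disjoint union of copies of it, so that component is isomorphic to \(\widetilde X\). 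Hence \(\widetilde X\) is a finite \'etale cover of \(T\times H\), which forces \(\pi_1(T\times H)\) to be finite and again \(T=\mathrm{pt}\).

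One also uses the convention that the empty product (\(H\) a point) is allowed. This case cannot occur, since a point is not poor in the relevant dimension, but the statement as written is consistent either way.
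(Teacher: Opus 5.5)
Your proposal is correct and follows essentially the same route as the paper: the paper eliminates the torus by forming the fiber product $\widetilde X\times_X(T\times H)$, which is exactly your alternative argument, while your main line phrases the same point as finiteness of $\pi_1(X)$ passing to the finite-index subgroup $\pi_1(T\times H)$. Both arguments then conclude with Lemma~\ref{lemma:aut} to show that $G$ is trivial.
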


\begin{proof}
    By Theorem~\ref{thm:main}, we can write
    \[
    X \;\cong\; (T\times H)/G,
    \]
    where $T$ is a (possibly trivial) poor complex torus, $H=\prod_i H_i$ is a product of poor IHS manifolds (possibly trivial), and $G$ is a finite subgroup of $\Aut(T\times H)$ acting freely. Let $\pi:\widetilde X\to X$ be a finite \'etale cover with $\widetilde X$ simply connected. Form the fiber product
    \[
    Z := \widetilde X\times_X (T\times H).
    \]
    Then $Z\to \widetilde X$ is finite \'etale, hence $Z$ is a disjoint union of copies of $\widetilde X$, and in particular each connected component of $Z$ is simply connected. On the other hand, $Z\to T\times H$ is also finite \'etale, so each connected component of $Z$ is a finite \'etale cover of $T\times H$. Since $H$ is a product of IHS manifolds, it is simply connected; hence $\pi_1(T\times H)\cong \pi_1(T)$. A simply connected finite \'etale cover of $T\times H$ exists only if $\pi_1(T)=0$, i.e.\ $T$ is a point. Therefore $T=\{*\}$ and
    \[
    X \;\cong\; H/G.
    \]
    If $G$ is not trivial, we have a contradiction due to Lemma \ref{lemma:aut}.
\end{proof}
In \cite{bandman2021bimeromorphic} (see the discussion before Remark 1.10) it was observed that a complex torus $T$ is poor if and only if $a(T)=0$. Such examples exist in every dimension, and an explicit construction is given in \cite{bandman2023simple}. Let us give examples of such tori following \cite{bandman2023simple}. 
\begin{example}     
Let \[ f(x):=x^6+x+1 \in \mathbb{Q}[x]. \] Since $6=2g$ with $g=3$, this corresponds to the case of a purely imaginary number field of degree $2g=6$. Set \[ E:=\mathbb{Q}[x]/(f(x)). \] Let $\tilde{x}\in E$ denote the image of $x$. Choose the three roots $\alpha_1,\alpha_2,\alpha_3 \in \mathbb{C}$ of $f(x)$ with a positive imaginary part. For each $j=1,2,3$, define an embedding \[ \tau_j:E \longrightarrow \mathbb{C}, \qquad \tau_j(\tilde{x})=\alpha_j. \] Then the map \[ \Psi:E_{\mathbb{R}}:=E\otimes_{\mathbb{Q}}\mathbb{R}\longrightarrow \mathbb{C}^3, \qquad e \longmapsto (\tau_1(e),\tau_2(e),\tau_3(e)) \] is an isomorphism of $\mathbb{R}$-algebras. Now consider the lattice \[ \Gamma:=\mathbb{Z}\cdot 1 \oplus \mathbb{Z}\cdot \tilde{x} \oplus \mathbb{Z}\cdot \tilde{x}^2 \oplus \mathbb{Z}\cdot \tilde{x}^3 \oplus \mathbb{Z}\cdot \tilde{x}^4 \oplus \mathbb{Z}\cdot \tilde{x}^5 \subset E. \] Its image $\Psi(\Gamma)\subset \mathbb{C}^3$ is a discrete lattice of rank $6$, generated by \[ (1,1,1),\quad (\alpha_1,\alpha_2,\alpha_3),\quad (\alpha_1^2,\alpha_2^2,\alpha_3^2),\quad (\alpha_1^3,\alpha_2^3,\alpha_3^3),\quad (\alpha_1^4,\alpha_2^4,\alpha_3^4),\quad (\alpha_1^5,\alpha_2^5,\alpha_3^5). \] Therefore \[ T:=\mathbb{C}^3/\Psi(\Gamma) \] is a complex torus of dimension $3$. Equivalently, one may write \[ T \cong E_{\mathbb{R}}/\Gamma, \] where the complex structure of $E_{\mathbb{R}}$ is induced by the chosen embeddings $\tau_1,\tau_2,\tau_3$. This is poor because of \cite[Theorem 1.3, Proposition 2.1]{bandman2023simple}. 
\end{example}
In the next section, we study poor $K3$ surfaces. We plan to address the higher-dimensional analogue in future work.

\section{Dimension 2}
\subsection{Preliminaries}
We treat the case of dimension 2 separately due to its particularly well-understood classification. The Minimal Model Program for compact complex surfaces is firmly established by the Enriques-Kodaira classification \cite{barth2003compact} and the foundational work of Mori \cite{ab3c5836-bdbd-3fe7-9721-d43dbbc02ff0} and Miyaoka-Mori \cite{c50b404d-1559-39c7-8769-80091a236fda}. We begin by examining the minimal models for K\"ahler surfaces, as listed in \cite{barth2003compact} (page 244). Since a poor manifold has algebraic dimension zero, the only possible poor minimal surfaces are the $K3$ surfaces and the complex tori.
\begin{definition}
    A $K3$ surface is a compact complex surface $X$ with a trivial canonical bundle $K_X$ and the first Betti number $b_1(X) = 0$.
\end{definition}
The intersection form $(H^2(X, \mathbb{Z}), \cup)$ of a $K3$ surface $X$ is isomorphic to the $K3$ lattice $\Lambda = 2(-E_8) \oplus 3U$, where $U$ denotes the standard hyperbolic plane  $\left(\mathbb{Z}^2, \begin{pmatrix} 0 & 1 \\ 1 & 0 \end{pmatrix}\right)$. Consider the complex vector space $\Lambda_{\mathbb{C}} := \Lambda \otimes_{\mathbb{Z}} \mathbb{C}$ equipped with the $\mathbb{C}$-linear extension of the bilinear form $(\cdot, \cdot)$. The zero locus of the associated quadratic form in $\mathbb{P}(\Lambda_{\mathbb{C}})$ defines a smooth quadric, owing to the non-degeneracy of the bilinear form. We now define the notation for the quadric associated to $\Lambda$ as the open subset (in the classical topology):
\[
Q_{\Lambda} := \left\{ x \in \mathbb{P}(\Lambda_{\mathbb{C}}) \mid x \cdot x = 0,\ x \cdot \bar{x} > 0 \right\} \subset \mathbb{P}(\Lambda_{\mathbb{C}}),
\]
which we consider as a complex manifold, known as the period domain. 
Every $K3$ surface $X$ has a universal deformation space $\operatorname{Def}(X)$ and a universal family $f:\mathcal{X}\to \operatorname{Def}(X)$. There is a canonical way to identify $H^{2,0}(X_t)$ with $\Lambda_{\mathbb{C}}$, say given by $\phi_t$. We call the induced map 
\begin{align*}
    \mathcal{P} : \operatorname{Def}(X) &\rightarrow \mathbb{P}(\Lambda_{\mathbb{C}}),\\
    t &\rightarrow [\phi(H^{2,0}(X_t))],
\end{align*}
Period map, This map is holomorphic. For details, see \cite[Chapter 5,6]{Huybrechts_2016}.
\begin{theorem} [Local Torelli theorem]
The Local period map,
\begin{align*}
    \mathcal{P} :\operatorname{Def}(X) &\rightarrow Q_{\Lambda}
\end{align*}
is a local biholomorphism.  
\end{theorem}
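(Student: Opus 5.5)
The statement to prove is the Local Torelli theorem for $K3$ surfaces: the period map $\mathcal P:\operatorname{Def}(X)\to Q_\Lambda$ is a local biholomorphism. The plan has four steps.

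\textbf{Step 1: the period map lands in $Q_\Lambda$ and is holomorphic.} For a nonzero $\sigma_t\in H^{2,0}(X_t)$ we have $\sigma_t\wedge\sigma_t=0$, since it is a $(4,0)$-form on a surface. We also have $\int\sigma_t\wedge\bar\sigma_t>0$. Hence $\phi_t([\sigma_t])$ satisfies $x\cdot x=0$ and $x\cdot\bar x>0$, so it lies in $Q_\Lambda$. Holomorphicity of $\mathcal P$ was recorded just before the statement: the Hodge filtration $F^2$ varies holomorphically in the Gauss--Manin trivialisation over the contractible base $\operatorname{Def}(X)$.

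\textbf{Step 2: the two sides have the same dimension.} Kuranishi theory gives that $\operatorname{Def}(X)$ is smooth of dimension $h^1(X,TX)=20$. Unobstructedness follows from Bogomolov--Tian--Todorov, or more simply from $H^2(X,TX)\cong H^2(X,\Omega^1_X)=H^{1,2}(X)=0$, using $TX\cong\Omega^1_X$ (contraction with $\sigma$, exactly as in the IHS discussion earlier). The relevant Hodge numbers are $h^{1,1}=20$ and $h^{1,0}=h^{0,1}=0$. On the other side, $Q_\Lambda$ is an open subset of a smooth quadric in $\mathbb P^{21}$, so it also has dimension $20$. Equal dimensions together with smoothness reduce the claim, via the inverse function theorem, to injectivity of the differential at each point. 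Since every point of $\operatorname{Def}(X)$ is again a $K3$ surface with its own Kuranishi family, it suffices to treat the base point $0$.

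\textbf{Step 3: compute $d\mathcal P_0$ by Griffiths transversality.} The tangent space to the quadric at $[\sigma]$ is $\operatorname{Hom}(\mathbb C\sigma,\sigma^\perp/\mathbb C\sigma)$. For a surface we have $\sigma^\perp=F^1=H^{2,0}\oplus H^{1,1}$, so this tangent space is $\operatorname{Hom}(H^{2,0},H^{1,1})$. Griffiths' computation identifies the differential of $\mathcal P$ at $0$ with the composition of the Kodaira--Spencer isomorphism $T_0\operatorname{Def}(X)\cong H^1(X,TX)$ and the contraction map
\[
H^1(X,TX)\longrightarrow \operatorname{Hom}\bigl(H^0(X,\Omega^2_X),H^1(X,\Omega^1_X)\bigr),\qquad \xi\longmapsto(\sigma\mapsto \iota_\xi\sigma).
\]

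\textbf{Step 4: injectivity.} Because $\sigma$ is nowhere vanishing, $\xi\mapsto\iota_\xi\sigma$ is induced by the sheaf isomorphism $TX\xrightarrow{\sim}\Omega^1_X$. It is therefore an isomorphism $H^1(X,TX)\cong H^1(X,\Omega^1_X)$, and $d\mathcal P_0$ is bijective. The inverse function theorem now gives that $\mathcal P$ is a local biholomorphism.

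The main obstacle is Step 3: justifying that the derivative of the period map really is cup product with the Kodaira--Spencer class. I would take this from Griffiths' general result as presented in \cite[Chapter 6]{Huybrechts_2016}. The intended argument is to differentiate $\sigma_t$ in a local $C^\infty$ trivialisation and observe that the $(1,1)$-part of $\partial_t\sigma_t|_{t=0}$ is represented by $\iota_{\xi}\sigma$, where $\xi$ is a Dolbeault representative of the Kodaira--Spencer class.
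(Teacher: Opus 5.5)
Your proposal is correct. It is the standard argument: Griffiths' identification of $d\mathcal P_0$ with contraction against the Kodaira--Spencer class, which is an isomorphism $H^1(X,TX)\cong H^{1,1}(X)$ because $\sigma$ identifies $TX$ with $\Omega^1_X$. This is exactly the proof in \cite{beauville1983varietes}, which the paper cites instead of reproducing. Your reduction to the base point is also unnecessary, since invertibility of $d\mathcal P_0$ already gives a local biholomorphism once the germ $\operatorname{Def}(X)$ is shrunk.
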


\begin{proof}
    \cite{beauville1983varietes}
\end{proof}
\begin{definition}
    A marked $K3$ is a pair $(X,\phi)$, where $X$ is a $K3$ surface and $\phi$ is the lattice isomorphism $\phi : (H^2(X,\mathbb{Z}),\cup )\cong \Lambda$. We say two marked $K3$ surfaces $(X,\phi)$ and $(X',\phi')$ are equivalent $(X,\phi) \sim (X',\phi')$, if there exists a biholomorphic map $f: X \cong X'$ such that $\phi' = \phi \circ f^*$.
\end{definition}
\begin{definition}
    The moduli space of marked $K3$ surfaces is the space,
    \[
    \mathcal{T}_{\Lambda} := \{ (X,\phi) = \text{Marked K3}\}/\sim.
    \]
\end{definition}
The moduli space $\mathcal{T}_{\Lambda}$ has the structure of a $20$-dimensional complex manifold, obtained by gluing the bases of the universal deformations $\mathcal{X} \rightarrow \operatorname{Def}(X)$ for all $K3$ surfaces $X$. In particular, the $\operatorname{Def}(X)$ forms the basis of open sets in $\mathcal{T}_{\Lambda}$. The local period maps glue to the global period map,
\[
\mathcal{P}: \mathcal{T}_{\Lambda} \rightarrow Q_{\Lambda}.
\]
Now, let us recall the Global Torelli Theorem
\begin{theorem}
    With the above notation:
    \begin{enumerate}
        \item The Global period map is surjective restricted to each connected component of $\mathcal{T}_{\Lambda}$.
        \item  The Global period map is injective outside countable union of proper subvarieties in each connected component of $\mathcal{T}_{\Lambda}$.
        \item Suppose $(X, \phi), (X', \phi') \in \mathcal{T}_{\Lambda}$ are distinct inseparable points (by that we mean points that cannot be separated by open sets). Then $X \cong X'$ and $\mathcal{P}_{\Lambda}(X, \phi) = \mathcal{P}_{\Lambda}(X', \phi')$ is contained in $\alpha^\perp$ for some $0 \neq \alpha \in \Lambda$. Here, $\alpha^{\perp} = \{x\in Q_{\Lambda} | x \cdot \alpha = 0 \}$.
    \end{enumerate}
\end{theorem}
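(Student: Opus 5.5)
This is the classical Global Torelli theorem for $K3$ surfaces, and in the paper it is quoted rather than reproved (see \cite[Chapters 6--7]{Huybrechts_2016}). If I had to reconstruct a proof, I would follow the standard route in three steps, one for each item.

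\textbf{Hodge-theoretic Torelli.} The starting point is the strong form of the Torelli theorem. If $\psi:H^2(X',\mathbb Z)\to H^2(X,\mathbb Z)$ is a Hodge isometry that sends some Kähler class to a Kähler class, then $\psi=f^*$ for a unique isomorphism $f:X\to X'$. I would prove this in four stages:
\begin{itemize}
\item first for Kummer surfaces, where the statement reduces to the Torelli theorem for complex tori;
\item then observe that periods of Kummer surfaces are dense in $Q_\Lambda$;
\item then deform along the family;
\item finally, take limits of graphs of isomorphisms as analytic cycles in $X\times X'$ (Barlet/Douady compactness).
\end{itemize}
The limit cycle has the form
\[
\Gamma=\Gamma_f+\sum_i C_i\times C_i',
\]
where the $C_i$ and $C_i'$ are curves. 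Because $\psi$ preserves Kähler classes, the correction terms act trivially on the Kähler cone, and this forces $\psi=f^*$.

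\textbf{Part (1): surjectivity.} I would use the Local Torelli theorem, which says $\mathcal P$ is a local biholomorphism, together with twistor families. Each hyperkähler metric on a $K3$ surface (these exist by Yau's theorem) gives a twistor line in $Q_\Lambda$ whose points are all periods of $K3$ surfaces. Any two period points in the image of a connected component of $\mathcal T_\Lambda$ are joined by a chain of twistor lines, because $Q_\Lambda$ is connected. So the image of each component is open and closed, hence everything.

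\textbf{Part (2): generic injectivity.} Suppose $\mathcal P(X,\phi)=\mathcal P(X',\phi')$ and the period lies outside $\bigcup_{0\neq\alpha\in\Lambda}\alpha^\perp$. Then $\operatorname{Pic}=0$, so $X$ contains no curves, and the Kähler cone is an entire component of the positive cone. The Hodge isometry $\phi^{-1}\phi'$, possibly composed with $-\mathrm{id}$, maps Kähler classes to Kähler classes. Since $\pm\mathrm{id}$ differs in sign on the positive-cone component, and marked points in the same component are related by the orientation-preserving choice, the Hodge-theoretic Torelli theorem gives $(X,\phi)\sim(X',\phi')$. The complement of the countable union of hyperplanes $\alpha^\perp$ is very general, which is what (2) asserts.

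\textbf{Part (3): inseparable points.} Take sequences of marked surfaces converging to both $(X,\phi)$ and $(X',\phi')$ that are identified with each other by isomorphisms. Their graphs limit to a cycle $\Gamma$ of the form above. Its cohomological action equals $\phi^{-1}\phi'$, which is a Hodge isometry, so $\Gamma$ contains an irreducible component of dimension two mapping onto both factors. That component is the graph $\Gamma_f$ of a bimeromorphic map, and a bimeromorphic map between $K3$ surfaces is biholomorphic, so $X\cong X'$. Since the two points are distinct, some correction term $C_i\times C_i'$ must be nonzero, so $X$ contains a curve whose class $\alpha\neq 0$ satisfies $\alpha\cdot\omega=0$; that is, the period lies in $\alpha^\perp$.

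The main obstacle is the Hodge-theoretic Torelli step, in particular controlling the limit cycle and showing that the correction terms are harmless when Kähler classes are preserved. That is exactly where the Burns--Rapoport and Looijenga--Peters arguments spend their effort, so in the paper it is reasonable to cite \cite{Huybrechts_2016} for this step.
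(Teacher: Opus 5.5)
Like the paper, you ultimately rely on the literature. Your sketch of (3) is the Burns--Rapoport limit-of-graphs argument, which is also the standard proof of (3) in the reference the paper cites.

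For (2), your route differs from the one the paper points to. You first prove the strong Hodge-theoretic Torelli theorem (Kummer case, density of Kummer periods, limits of graphs) and then specialize to periods off $\bigcup_{\alpha\neq0}\alpha^\perp$. Verbitsky's argument, which the paper invokes, runs the other way:
\begin{enumerate}
\item lifting generic twistor lines shows that the period map on the Hausdorff reduction of a connected component of $\mathcal T_\Lambda$ is a covering of $Q_\Lambda$;
\item since $Q_\Lambda$ is simply connected, this covering is bijective, so distinct points of a component with the same period are inseparable;
\item (3) then forces that period into some $\alpha^\perp$.
\end{enumerate}
That route needs no Kummer surfaces and produces the strong Torelli theorem as an output rather than an input. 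Yours gives the strong form directly, at the cost of the Kummer case and Kummer density. Your orientation remark in (2) is fine: the orientation of the positive three-space spanned by $\operatorname{Re}\sigma$, $\operatorname{Im}\sigma$ and a K\"ahler class is locally constant on $\mathcal T_\Lambda$.

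Two steps of your sketch fail as written.

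\textbf{Surjectivity in (1).} Knowing that points already in the image are joined by twistor lines says nothing about points outside it, and nothing you wrote shows that the image is closed. The argument that works is this:
\begin{enumerate}
\item Pick $(X_0,\phi_0)$ in the component with period off $\bigcup_{\alpha\neq0}\alpha^\perp$. This is possible because the image is open and there are only countably many hyperplanes.
\item Join its period to an arbitrary $x\in Q_\Lambda$ by a chain of generic twistor lines whose junction points also avoid $\bigcup_{\alpha\neq0}\alpha^\perp$.
\item At the marked surface over each junction point, $\operatorname{Pic}=0$, so the K\"ahler cone is a full component of the positive cone.
\item Hence every twistor line through that point is the period image of a twistor family of that surface, after replacing the third basis vector by its negative if needed. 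Induction lifts the whole chain, including $x$.
\end{enumerate}

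\textbf{Correction terms in the strong Torelli step.} The correction terms cannot ``act trivially on the K\"ahler cone''. Write
\[
\psi(\beta)=f^*\beta+\sum_i n_i(\beta\cdot C_i')[C_i],\qquad n_i>0 .
\]
Each term is a nonzero effective class whenever $\beta$ is K\"ahler, so what must be shown is that no such terms occur. Since $\psi$ is an isometry, $\psi^{-1}$ equals its adjoint, namely $\alpha\mapsto f_*\alpha+\sum_i n_i(\alpha\cdot C_i)[C_i']$. Take $\kappa'$ K\"ahler on $X'$ with $\kappa=\psi(\kappa')$ K\"ahler on $X$. Then
\[
\kappa-f^*\kappa'=\sum_i n_i(\kappa'\cdot C_i')[C_i]
\qquad\text{and}\qquad
f^*\kappa'-\kappa=\sum_i n_i(\kappa\cdot C_i)\,f^*[C_i'] .
\]
Both are nonnegative combinations of curve classes on $X$, and both are nonzero if some $n_i>0$. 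Their sum is $0$, yet pairing it with a K\"ahler class gives a strictly positive number, a contradiction. Hence $\psi=f^*$.
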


\begin{proof}
       Statement (1) was first proved by Todorov in \cite{todorov1980applications} (see also \cite{Huybrechts_2016}, Chapter 7). Statements (2) and (3) are direct consequences of technical procedures introduced by Verbitsky; detailed proofs can be found in \cite{Huybrechts_2016}, Chapter 7, Sections 4 and 5.
    \end{proof}
\subsection{Poor $K3$ Surfaces}
The following Lemma, which will be used to classify Poor $K3$ surfaces, is a direct consequence of classical theory of compact complex surfaces. We give the proof for readers convenience.
\begin{lemma}\label{LEMMA:L3.8}
    A K3 surface $X$ is poor if and only if it satisfies one of the following:
    \begin{enumerate}
        \item The Picard rank of $X$ is $0$.
        \item For all $\mathcal{O}_X \neq L \in \operatorname{Pic}(X)$, we have  $c_1(L)^2 < -2$.
    \end{enumerate}
\end{lemma}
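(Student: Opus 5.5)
We prove both directions with standard surface theory on the $K3$ surface $X$, namely the adjunction formula, Riemann--Roch, and the fact that $K_X\simeq\mathcal O_X$. Throughout we identify $\operatorname{Pic}(X)$ with $\operatorname{NS}(X)\subset H^2(X,\mathbb Z)$ via $c_1$, which is injective because $H^1(X,\mathcal O_X)=0$.

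\emph{Poor implies (1) or (2).} Suppose $X$ is poor and $\rho(X)\neq 0$. Take $\mathcal O_X\neq L\in\operatorname{Pic}(X)$ and suppose, for a contradiction, that $c_1(L)^2\geq -2$. Riemann--Roch on a $K3$ surface gives
\[
\chi(L)=2+\tfrac12 c_1(L)^2\geq 1,
\]
so $h^0(L)+h^2(L)\geq 1$. Serre duality with $K_X$ trivial gives $h^2(L)=h^0(L^{-1})$, so either $L$ or $L^{-1}$ has a nonzero section. Both are nontrivial line bundles, so by Lemma~\ref{lem:lemma1} $X$ contains a codimension-one subvariety, contradicting poorness. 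Hence every nontrivial $L$ satisfies $c_1(L)^2<-2$, which is (2). This direction is routine.

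\emph{(1) or (2) implies poor.} On a surface, rational curves and codimension-one subvarieties are both curves, so it suffices to show that $X$ contains no irreducible compact curve $C$. Suppose such a $C$ exists. It defines an effective divisor, and hence a line bundle $L=\mathcal O_X(C)$ with a nonzero section vanishing exactly on $C$. This $L$ is nontrivial, because a nonvanishing section of $\mathcal O_X$ has empty zero locus. So $\rho(X)\geq 1$, which rules out (1).

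To contradict (2), apply adjunction with $K_X=0$:
\[
C^2=2p_a(C)-2\geq -2,
\]
since the arithmetic genus of an irreducible reduced curve satisfies $p_a(C)\geq 0$. This contradicts $c_1(L)^2<-2$. Therefore $X$ contains no curves at all, and in particular no rational curves and no divisors, so $X$ is poor.

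The only point needing care is the inequality $p_a(C)\geq 0$ for a possibly singular irreducible curve. It follows from $h^0(\mathcal O_C)=1$ for $C$ reduced and irreducible, so that $p_a(C)=h^1(\mathcal O_C)\geq 0$, together with adjunction for Cartier divisors on a smooth surface, $2p_a-2=C^2+K_X\cdot C$ (see \cite{barth2003compact}). I expect no serious obstacle; the main thing to check is that conditions (1) and (2) are correctly read as a dichotomy, with (2) being vacuous when $\rho(X)=0$.
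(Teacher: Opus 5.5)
Your proposal is correct and follows essentially the same route as the paper: Riemann--Roch $\chi(L)=2+\tfrac12 c_1(L)^2$ with Serre duality gives a section of $L$ or $L^{-1}$ in the forward direction, and adjunction $C^2=2p_a(C)-2\geq -2$ handles the converse. Your extra remarks add detail the paper leaves implicit, namely injectivity of $c_1$, $p_a(C)\geq 0$ for integral curves, and the fact that (1) makes (2) vacuous.
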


\begin{proof}
       If $X$ is poor and $\rho(X)\neq 0$, then for any $0\neq L\in \operatorname{Pic}(X)$ the Riemann--Roch formula $\chi(X,L)=2+\frac{1}{2}c_1(L)^2$ shows that $c_1(L)^2\ge -2$ would imply $\chi(X,L)\ge 1$, hence $H^0(X,L)\neq 0$ or $H^0(X,L^{*})\neq 0$, so $L$ or $L^{*}$ is effective and yields a curve, a contradiction. Conversely, if $\rho(X)=0$ then there are no curve classes at all, and if (2) holds, then any curve $C\subset X$ would satisfy $C^2\ge -2$ by adjunction ($2p_a(C)-2=C^2$), contradicting (2).
    \end{proof}
Using the above Lemma, we would like to describe all points in $Q_{\Lambda}$ that arise as period points of a marked poor $K3$ surface. Fix a period point $[\omega] \in Q_{\Lambda}$, which determines a Hodge structure on $\Lambda_{\mathbb{C}}$:
\[
\Lambda_{\mathbb{C}} = H^{2,0}_{\omega} \oplus H^{1,1}_{\omega} \oplus H^{0,2}_{\omega}, 
\qquad H^{2,0}_{\omega} = \mathbb{C}\omega.
\]
We can then talk about integral $(1,1)$-classes, which are given by
\[
\operatorname{Pic}([\omega]) = \{\delta \in \Lambda \mid (\delta,\omega) = 0\}.
\]
Let $(X,\phi)$ be a marked $K3$ surface with $\mathcal{P}([(X,\phi)]) = [\omega]$. Then
\[
\phi(\operatorname{Pic}(X)) 
= \{\delta \in \Lambda \mid (\delta,\omega) = 0\} 
= \operatorname{Pic}([\omega]).
\]
If we pick another marked surface $(X',\phi')$ with the same period $[\omega]$ (in the same connected component as $(X,\phi)$), then by the Global Torelli theorem $X \cong X'$, and the same argument gives
\[
\operatorname{Pic}(X) \cong \operatorname{Pic}([\omega]) \cong \operatorname{Pic}(X').
\]
Thus, all $K3$ surfaces with period $[\omega]$ (up to biholomorphism) have isometric Picard lattices, and through a marking their Picard lattice is exactly the fixed sublattice
\[
\{\delta \in \Lambda \mid (\delta,\omega) = 0\}.
\]

Forget $Q_{\Lambda}$ for a moment and look at the ambient projective space $\mathbb{P}(\Lambda_{\mathbb{C}})$. Pick a nonzero vector $\delta \in \Lambda$, and consider the linear functional
\begin{align*}
    l_{\delta} : \Lambda_{\mathbb{C}} &\longrightarrow \mathbb{C} \\
    \omega &\longmapsto (\delta,\omega).
\end{align*}
Then $\mathbb{P}(\operatorname{Ker}(l_{\delta}))$ is a hyperplane in projective space. Simple linear algebra shows that
\[
H_{\delta} := Q_{\Lambda} \cap \mathbb{P}(\operatorname{Ker}(l_{\delta}))
\]
is not empty and hence a codimension $1$ subvariety of $Q_{\Lambda}$. 
\begin{remark}\label{remark:proper}
    Observe that $H_{v} = H_{w}$ if and only if $v=\lambda w$ for some $\lambda \in \mathbb{Q}$. $\operatorname{Ker}(l_v) =\operatorname{Ker}(l_w)$ implies $l_v = \lambda l_w$ and $(\cdot,\cdot)$ is nondegenerate, thus induces an injective map $V \rightarrow V^*$ and the conclusion follows.
\end{remark}

\begin{definition}
    A sublattice $P$ of $\Lambda$ is called \emph{poor} if $P=0$ or for every $0 \neq v \in P$ one has $v^2 < -2$.
\end{definition}

\begin{theorem}\label{theorem:KP}
    Let $\Lambda$ be a $K3$ lattice. By $H_{\delta}$ we denote the set 
    \[
    H_{\delta} = Q_{\Lambda} \cap \mathbb{P}(\operatorname{Ker}(l_{\delta})).
    \]
    $H_{\delta}$ is the set of period points at which $\delta$ remains of type $(1,1)$. Then we have the following.
    \begin{description}
        \item[1] The set 
    \[
    \mathcal{U} := \{[\omega] \in Q_{\Lambda} \mid \operatorname{Pic}([\omega]) \text{ is poor}\}
    \]is the complement of a countable union of closed subvarieties of codimension $1$ in $Q_{\Lambda}$. In particular,
    \[
    \mathcal{U} 
    = Q_{\Lambda} \setminus \bigcup_{\substack{\delta \in \Lambda \\ \delta^2 \geq -2}} H_{\delta}.
    \]
    \item[2] Let $(X,\phi)$ be a marked $K3$ surface. Then $X$ is poor if and only if $\mathcal{P}([(X,\phi)]) \in \mathcal{U}$.
    \end{description}
\end{theorem}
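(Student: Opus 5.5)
Part 1 is essentially a set-theoretic unwinding of the definitions. For $[\omega]\in Q_\Lambda$ we have $\operatorname{Pic}([\omega])=\{\delta\in\Lambda\mid(\delta,\omega)=0\}$. By definition this lattice fails to be poor exactly when it contains some $\delta\neq 0$ with $\delta^2\geq -2$. That condition is the same as saying $[\omega]\in H_\delta$ for some nonzero $\delta\in\Lambda$ with $\delta^2\geq-2$. Hence
\[
\mathcal U=Q_\Lambda\setminus\bigcup_{0\neq\delta\in\Lambda,\ \delta^2\geq -2}H_\delta .
\]
In the displayed union, $\delta=0$ must be understood as excluded, since $H_0=Q_\Lambda$.

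It remains to check that this is a countable union of closed codimension-one subvarieties. Countability follows because $\Lambda$ is countable. Each $H_\delta$ is the intersection of $Q_\Lambda$ with the hyperplane $\mathbb P(\operatorname{Ker} l_\delta)$, so it is a closed analytic subset. It is nonempty and of codimension $1$, as already noted before Remark~\ref{remark:proper}. I would justify nonemptiness concretely: $\delta^\perp\otimes\mathbb R$ has signature $(3,18)$, $(2,19)$, or $(3,19)$-degenerate depending on the sign of $\delta^2$. In each case it contains a positive definite real $2$-plane, and taking $\omega=x+iy$ with $x,y$ an orthonormal basis of that plane gives a point of $Q_\Lambda\cap H_\delta$. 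The hyperplane cannot contain the irreducible quadric $Q_\Lambda$, because $l_\delta\neq 0$ on $\Lambda_{\mathbb C}$ and $Q_\Lambda$ spans $\mathbb P(\Lambda_{\mathbb C})$. Therefore $H_\delta$ is a proper hypersurface section.

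For part 2, let $(X,\phi)$ be a marked $K3$ surface with period $[\omega]$. As explained before the theorem, $\phi$ restricts to an isometry $\operatorname{Pic}(X)\cong\operatorname{Pic}([\omega])$; this is the Lefschetz $(1,1)$ theorem together with the fact that $\operatorname{Pic}(X)\to H^2(X,\mathbb Z)$ is injective because $H^1(X,\mathcal O_X)=0$. So $X$ satisfies condition (1) or (2) of Lemma~\ref{LEMMA:L3.8} precisely when $\operatorname{Pic}([\omega])$ is a poor sublattice: trivial Picard rank corresponds to $P=0$, and $c_1(L)^2<-2$ for all nontrivial $L$ corresponds to $v^2<-2$ for all $v\neq0$. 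Lemma~\ref{LEMMA:L3.8} then shows this is equivalent to $X$ being poor. The answer does not depend on the choice of marking, since markings differ by isometries of $\Lambda$ that carry $\operatorname{Pic}$ to $\operatorname{Pic}$.

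The only delicate point I anticipate is the codimension/nonemptiness claim for $H_\delta$ when $\delta^2\geq0$, together with the correct exclusion of $\delta=0$. Once the signature computation above is in place, the rest is formal.
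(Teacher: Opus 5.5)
Your proof is correct and is essentially the paper's own argument: part 1 unwinds the definition of a poor lattice via $\operatorname{Pic}([\omega])=\{\delta\in\Lambda\mid(\delta,\omega)=0\}$, and part 2 combines $\phi(\operatorname{Pic}(X))=\operatorname{Pic}([\omega])$ with Lemma~\ref{LEMMA:L3.8}, and you add details the paper leaves implicit, notably the necessary exclusion of $\delta=0$ from the union, which the paper's display omits. One small slip: for $\delta^2=0$, $\delta^\perp_{\mathbb R}$ is degenerate with radical $\mathbb R\delta$ and quotient of signature $(2,18)$, not ``$(3,19)$-degenerate''; it still contains a positive definite $2$-plane, so your nonemptiness argument stands.
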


\begin{proof}
        The first statement follows from the discussion after Lemma \ref{LEMMA:L3.8}. The second statement follows from the fact that if $(X,\phi)$ is a marked $K3$ surface with $\mathcal{P}([(X,\phi)]) = [\omega]$, then $\phi(\operatorname{Pic}(X)) = \operatorname{Pic}([\omega])$, and from Lemma \ref{LEMMA:L3.8}.
\end{proof}
\begin{corollary}
We have the following description of poor $K3$ surfaces.
\begin{itemize}
    \item A very general $K3$ surface is poor.
    \item $\mathcal{U}$ has empty interior in $Q_{\Lambda}$.
\end{itemize}
\end{corollary}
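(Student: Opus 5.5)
The plan is to deduce both bullets from Theorem~\ref{theorem:KP}, which writes
\[
\mathcal U = Q_\Lambda\setminus \bigcup_{\delta\in\Lambda,\ \delta^2\ge -2} H_\delta .
\]
For the first bullet, we observe that $\Lambda$ is countable, so the union runs over countably many $H_\delta$. Each $H_\delta$ is a closed analytic subset of $Q_\Lambda$ of codimension one, hence proper. This uses the remark before Remark~\ref{remark:proper}, together with irreducibility of $Q_\Lambda$ on each connected component; $Q_\Lambda$ has two components exchanged by conjugation, and $H_\delta$ meets each in a hypersurface. Thus $\mathcal U$ is the complement of a countable union of proper subvarieties, i.e.\ very general in $Q_\Lambda$.

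To transfer this to surfaces, take a connected component of $\mathcal T_\Lambda$. The period map restricted to it is a surjective local biholomorphism, by the local Torelli theorem and part (1) of the global Torelli theorem. The preimage of each $H_\delta$ is therefore a proper closed analytic subset of codimension one. Hence a very general marked $K3$ surface has period in $\mathcal U$, and is poor by part 2 of Theorem~\ref{theorem:KP}. By Baire's theorem $\mathcal U$ is also dense.

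For the second bullet, we show that the bad hyperplanes are dense: for every nonempty open $V\subset Q_\Lambda$ there is $\delta\in\Lambda$ with $\delta^2\ge -2$ and $H_\delta\cap V\neq\varnothing$. Then $V\not\subset \mathcal U$, so $\mathcal U$ has empty interior.

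I expect this density step to be the main obstacle. My first attempt would use classes with $\delta^2=-2$. The hyperplane $\delta^\perp$ meets $Q_\Lambda$ nontrivially because $\delta^\perp\otimes\mathbb R$ has signature $(3,16)$ and so contains positive $2$-planes. The $-2$ classes are exactly one orbit of $O(\Lambda)$, since the lattice is even unimodular and indefinite with a unique primitive orbit. Since $O(\Lambda)$ is an arithmetic group, it is Zariski dense in $O(\Lambda_{\mathbb R})$. I would then argue that the union of $g\cdot H_{\delta_0}$ over $g\in O(\Lambda)$ is dense in $Q_\Lambda$.

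A more elementary route avoids orbit density. The condition $[\omega]\in H_\delta$ means $(\omega,\delta)=0$, i.e.\ $\delta\in (\mathrm{Re}\,\omega,\mathrm{Im}\,\omega)^\perp$, and that orthogonal complement is a negative definite real $20$-dimensional subspace. Given $\omega$, we approximate it by $\omega'$ whose positive $2$-plane $P'$ is rational-ish, in the sense that $P'^\perp$ contains a lattice vector $\delta$ with $\delta^2\ge -2$. Alternatively, we fix a vector with $\delta^2\ge 0$; the cases $\delta^2=0$ or $\delta^2>0$ also count, since the union in Theorem~\ref{theorem:KP} includes them. Vectors $\delta\in\Lambda$ with $\delta^2\geq 0$ have directions dense in the real cone $\{x^2\ge0\}$. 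For such $\delta$, the subset $H_\delta$ is the set of positive planes orthogonal to $\delta$.

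We then perturb the plane $P=\langle\mathrm{Re}\,\omega,\mathrm{Im}\,\omega\rangle$ slightly to be orthogonal to some $\delta$ close to a chosen positive vector $x\perp P$. Such an $x$ exists since $P^\perp$ has signature $(1,19)$. Rotating $P$ by a small element of $SO(\Lambda_{\mathbb R})$ sending $x$ to $\delta/|\delta|$ gives a nearby period in $H_\delta$, as desired.
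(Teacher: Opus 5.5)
Your argument is correct and is essentially the paper's. The first bullet is read off from Theorem~\ref{theorem:KP} in the same way. For the second bullet, the paper likewise uses that the periods of projective $K3$ surfaces, i.e.\ the points of $\bigcup_{\delta^2>0}H_\delta$, are dense and lie outside $\mathcal U$; it cites Huybrechts for this density, which your rotation argument proves by hand.

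You can drop the $(-2)$-orbit sketch, which as written is not justified: Zariski density of $O(\Lambda)$ alone does not give topological density of a union of translates of a hypersurface.
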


\begin{proof}
        The first statement follows from Theorem \ref{theorem:KP}. On the other hand, a $K3$ surface $X$ is projective if and only if there exists a class $v \in \operatorname{Pic}(X)$ with $v^2 > 0$ (see \cite{pjateckiui1971torelli}). Thus, the period points of projective $K3$ surfaces are dense in $Q_{\Lambda}$ (\cite[Proposition 6.2.9]{Huybrechts_2016}). Since the complement of $\mathcal{U}$ contains a dense subset of projective period points, the complement is dense and, therefore, $\mathcal{U}$ cannot have a nonempty interior.
\end{proof}
Now, let us understand how the poor $K3$ surfaces sit inside $H_{\delta}$ when $\delta^{2}<-2$. This is the only case of interest for us because if $\delta^{2}\ge -2$, then no point of $H_{\delta}$ corresponds to a poor $K3$ surface (indeed, every surface that corresponds to a point in $H_{\delta}$ carries an integral $(1,1)$ class of square $\ge -2$, hence a curve due to Lemma \ref{LEMMA:L3.8}). 
\begin{remark}\label{rem:denalg}
    The period domain $Q_\Lambda$ (and similarly $H_\delta$) can be identified with the space of oriented positive definite real $2$-planes in the relevant real quadratic space $V_\mathbb{R}$; for instance, for $H_\delta$ one takes $V_\mathbb{R}=\delta^\perp_{\mathbb{R}}$, while for $Q_\Lambda$ one takes $V_\mathbb{R}=\Lambda\otimes_\mathbb{Z}\mathbb{R}$. Thus $Q_\Lambda$ is a homogeneous space for the real Lie group $SO^+(V_\mathbb{R})$, acting transitively by $g\cdot P:=g(P)$. An oriented positive $2$-plane $P\subset V_\mathbb{R}$ is called \emph{rational} if it is spanned by two vectors in
    \[
    V_\mathbb{Q}:=V_\mathbb{R}\cap(\Lambda\otimes_\mathbb{Z}\mathbb{Q}).
    \]
    Such planes exist because $V_\mathbb{Q}$ is dense in $V_\mathbb{R}$ and positivity of the restriction of the quadratic form to a $2$-plane is an open condition. Moreover, the set of rational oriented positive $2$-planes is the $SO(V_\mathbb{Q})$-orbit of any fixed rational plane. Since $SO(V_\mathbb{Q})$ is dense in $SO^+(V_\mathbb{R})$ (by weak approximation for connected $\mathbb{Q}$-groups), it follows that rational oriented positive $2$-planes are dense in the period domain. For proof details, see \cite[Proposition 5.3]{rapinchuk2014strong}.
\end{remark}
Alternatively, one could give an elementary argument as follows.
\begin{lemma}\label{lem:rational_planes_dense}
Let $V_\mathbb{R}$ be a real quadratic space and let $V_\mathbb{Q}\subset V_\mathbb{R}$ be a $\mathbb{Q}$-subspace such that $V_\mathbb{R}=V_\mathbb{Q}\otimes_\mathbb{Q}\mathbb{R}$ (equivalently, $V_\mathbb{Q}$ is dense in $V_\mathbb{R}$). Then the set of oriented positive definite real $2$-planes in $V_\mathbb{R}$ spanned by two vectors in $V_\mathbb{Q}$ is dense in the space of all oriented positive definite real $2$-planes in $V_\mathbb{R}$.
\end{lemma}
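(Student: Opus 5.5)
Fix an oriented positive definite $2$-plane $P\subset V_\mathbb{R}$ with an oriented orthogonal basis $(e_1,e_2)$, so $q(e_1)>0$, $q(e_2)>0$ and $(e_1,e_2)=0$. The plan is to approximate the basis, not the plane directly. Density of $V_\mathbb{Q}$ in $V_\mathbb{R}$ gives rational vectors $u_1,u_2\in V_\mathbb{Q}$ with $u_i\to e_i$. The space of oriented positive $2$-planes should be viewed as the open subset of the oriented Grassmannian $\widetilde{\operatorname{Gr}}_2(V_\mathbb{R})$ cut out by positivity. The map
\[
\{(v_1,v_2)\in V_\mathbb{R}^2 : v_1,v_2 \text{ linearly independent}\}\longrightarrow \widetilde{\operatorname{Gr}}_2(V_\mathbb{R}),\qquad (v_1,v_2)\mapsto \text{oriented span}\langle v_1,v_2\rangle
\]
is continuous; in fact it is the quotient map by $GL_2^+(\mathbb{R})$, hence open. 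Therefore, once $(u_1,u_2)$ is close enough to $(e_1,e_2)$, the oriented plane $\langle u_1,u_2\rangle$ lies in any prescribed neighbourhood of $P$.

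Two conditions must be checked for $(u_1,u_2)$ near $(e_1,e_2)$. First, $u_1,u_2$ are linearly independent, because linear independence is an open condition: the Gram determinant, or a $2\times2$ minor in coordinates, is nonzero at $(e_1,e_2)$ and continuous. Second, the restriction of the quadratic form to $\langle u_1,u_2\rangle$ is positive definite. For this, take the Gram matrix
\[
M(v_1,v_2)=\begin{pmatrix}(v_1,v_1)&(v_1,v_2)\\(v_2,v_1)&(v_2,v_2)\end{pmatrix}.
\]
It depends continuously on $(v_1,v_2)$. At $(e_1,e_2)$ it is diagonal with positive entries. By Sylvester's criterion, positive definiteness means $M_{11}>0$ and $\det M>0$, which are both open conditions, so they persist near $(e_1,e_2)$. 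The orientation is kept by taking $u_i$ close to $e_i$ in that order.

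The main point needing care is the topology on the space of oriented positive planes. If one uses the paper's identification with the period domain $Q_\Lambda$ via $[\omega]=[v_1+iv_2]$, the map $(v_1,v_2)\mapsto[v_1+iv_2]\in\mathbb{P}(V_\mathbb{C})$ is visibly continuous. In that setting the argument is immediate: $[u_1+iu_2]\to[e_1+ie_2]$ in projective space. Note that $u_1+iu_2$ need not satisfy $q=0$, since $u_1,u_2$ need not be orthogonal with equal norms. So the rational plane corresponds to the period point obtained by Gram--Schmidt within $\langle u_1,u_2\rangle$, which can be done over $\mathbb{R}$ and yields the same plane. This orthonormalization is continuous in $(u_1,u_2)$ on the open set of positive pairs, so the resulting period point converges to $[e_1+ie_2]$. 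This proves that rational oriented positive planes are dense.
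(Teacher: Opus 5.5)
Your proof is correct and follows essentially the same route as the paper's: approximate a basis of $P$ by vectors in $V_\mathbb{Q}$, use Sylvester's criterion on the Gram matrix to see that positivity is an open condition, and use continuity of the oriented-span map to conclude that the rational plane is close to $P$ with the same orientation. Your additional remarks make explicit two points the paper leaves implicit: linear independence of the approximating vectors, and the Gram--Schmidt normalization needed to land in $Q_\Lambda$.
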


\begin{proof}
Let $P=\langle u_1,u_2\rangle_\mathbb{R}\subset V_\mathbb{R}$ be an oriented positive definite
$2$-plane. Positivity of $P$ is equivalent to the Gram matrix
\[
G(u_1,u_2)=\begin{pmatrix}
(u_1,u_1) & (u_1,u_2)\\
(u_2,u_1) & (u_2,u_2)
\end{pmatrix}
\]
being positive definite, i.e.
\[
(u_1,u_1)>0 \qquad\text{and}\qquad \det G(u_1,u_2)>0.
\]
These inequalities define an open condition on the pair $(u_1,u_2)\in V_\mathbb{R}\times V_\mathbb{R}$ because the entries of $G(u_1,u_2)$ and its determinant depend continuously on $(u_1,u_2)$. Since $V_\mathbb{Q}$ is dense in $V_\mathbb{R}$, we can choose vectors $u_1',u_2'\in V_\mathbb{Q}$ arbitrarily close to $u_1,u_2$. For $u_1',u_2'$ sufficiently close, the Gram matrix $G(u_1',u_2')$ remains positive definite, hence $P':=\langle u_1',u_2'\rangle_\mathbb{R}$ is an oriented positive definite $2$-plane. Moreover, by taking $u_1',u_2'$ close enough, the plane $P'$ can be made arbitrarily close to $P$ in the Grassmannian topology, and the orientation is preserved for sufficiently small perturbations. Thus rational oriented positive $2$-planes are dense.
\end{proof}

\begin{lemma} \label{lem: dense proj k3}
    Let $\delta\in\Lambda$ be such that $\delta^2<0$. Then
    \[
    H^{proj}_{\delta} = \{[\alpha] \in H_{\delta}| \operatorname{Pic}([\alpha]) \text{ is projective ( i.e., contains } v \text{ such that } v^2 > 0 \text{)}\}
    \]
    is dense in $H_{\delta}$.
\end{lemma}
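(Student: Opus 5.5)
The plan is to reduce the statement to Remark~\ref{rem:denalg} and Lemma~\ref{lem:rational_planes_dense}, applied to the real quadratic space
\[
V_\mathbb{R}:=\delta^\perp_\mathbb{R}=\{x\in\Lambda\otimes_\mathbb{Z}\mathbb{R}\mid (x,\delta)=0\},
\qquad V_\mathbb{Q}:=\delta^\perp\cap(\Lambda\otimes_\mathbb{Z}\mathbb{Q}).
\]
Since $\delta$ is rational, $V_\mathbb{Q}$ is a $\mathbb{Q}$-form of $V_\mathbb{R}$. Since $\delta^2<0$ and $\Lambda_\mathbb{R}$ has signature $(3,19)$, the restriction of the form to $V_\mathbb{R}$ is nondegenerate of signature $(3,18)$. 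Thus $H_\delta$ is nonempty, and, as in Remark~\ref{rem:denalg}, it is identified with the space of oriented positive definite $2$-planes in $V_\mathbb{R}$ via $[\omega]\mapsto \langle \operatorname{Re}\omega,\operatorname{Im}\omega\rangle$. This identification is a homeomorphism onto an open subset of the real Grassmannian.

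\textbf{Step 1.} By Lemma~\ref{lem:rational_planes_dense}, oriented positive planes $P$ spanned by two vectors of $V_\mathbb{Q}$ are dense among all oriented positive planes in $V_\mathbb{R}$. It therefore suffices to show that every period point $[\omega]\in H_\delta$ whose plane $P$ is rational lies in $H^{proj}_\delta$, or at least arbitrarily close to such a point.

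\textbf{Step 2.} Take such a rational $P$. Then $P^\perp\subset\Lambda_\mathbb{R}$ is a rational subspace, defined over $\mathbb{Q}$, of signature $(1,19)$. It contains $\delta$ because $P\subset\delta^\perp$. We have $\operatorname{Pic}([\omega])=\Lambda\cap P^\perp$, and since $P^\perp$ is rational this is a lattice of full rank $20$ in $P^\perp$. A full-rank lattice in a real space of signature $(1,19)$ spans it, so its rational span $P^\perp_\mathbb{Q}$ contains vectors of positive square: the set of positive vectors is a nonempty open cone and $P^\perp_\mathbb{Q}$ is dense in it. Clearing denominators gives $v\in\operatorname{Pic}([\omega])$ with $v^2>0$. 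Hence $[\omega]\in H^{proj}_\delta$, and density follows from Step 1.

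\textbf{Main obstacle.} The delicate point is the identification in Step 2 of the period point with the plane. One must check that a rational plane $P\subset V_\mathbb{R}$ determines an $\omega$ with $\omega^2=0$ and $\omega\bar\omega>0$ lying in $H_\delta$. This holds by taking an orthogonal basis $e_1,e_2$ of $P$ of equal length, which may fail to be rational; that does not matter, since only $P$, not $\omega$, needs to be rational. One must also check that nearby planes correspond to nearby period points. Both are standard properties of the Grassmannian description, so I expect this step to be routine bookkeeping rather than a genuine difficulty.
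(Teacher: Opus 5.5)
Your proposal is correct and follows the paper's proof. Both arguments use the density of rational oriented positive planes in $\delta^\perp_{\mathbb{R}}$, which has signature $(3,18)$. For such a plane $P$, both then pick a rational vector of positive square in the orthogonal complement of $P$ and clear denominators to get $v\in\operatorname{Pic}([\omega])$ with $v^2>0$. The only difference is cosmetic: you work in $P^\perp\subset\Lambda_{\mathbb{R}}$, of signature $(1,19)$, while the paper uses $W=\delta^\perp_{\mathbb{R}}\cap P^\perp$, of signature $(1,18)$.
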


 \begin{proof}
    Let
    \[
    V:=\delta^{\perp}_{\mathbb{R}}\subset \Lambda\otimes_{\mathbb{Z}}\mathbb{R}.
    \]
    Since $\Lambda\otimes_{\mathbb{Z}}\mathbb{R}$ has signature $(3,19)$ and $\delta^2<0$, the real quadratic space $V$ has signature $(3,18)$. Recall that the period domain $Q_{\Lambda}$ can be identified (real-analytically) with the space of oriented $2$-dimensional real subspaces on which the form is positive definite, by sending a period $[\omega]$ to the oriented plane (\cite[Chapter 6]{Huybrechts_2016})
    \[
    P_{\omega}:=\langle \operatorname{Re}(\omega),\operatorname{Im}(\omega)\rangle_{\mathbb{R}} \subset \Lambda\otimes_{\mathbb{Z}}\mathbb{R}.
    \]
    Under this identification, the condition $(\delta,\omega)=0$ is equivalent to $P_{\omega}\subset \delta^{\perp}_{\mathbb{R}}=V$. Hence we obtain an identification
    \[
    H_{\delta}\ \cong\ \bigl\{\text{oriented positive $2$-planes }P\subset V\bigr\}.
    \]
    Let
    \[
    V_{\mathbb{Q}}:=V\cap \bigl(\Lambda\otimes_{\mathbb{Z}}\mathbb{Q}\bigr).
    \]
    Call an oriented positive $2$-plane $P\subset V$ \emph{rational} if it is spanned by two vectors in $V_{\mathbb{Q}}$. The set of rational oriented positive $2$-planes is dense among all oriented positive $2$-planes in $V$ (see Remark \ref{rem:denalg}). Fix a rational oriented positive $2$-plane $P_{\mathbb{Q}}\subset V$ and let $[\omega]\in H_{\delta}$ be the corresponding period point. Choose $v,w\in V_{\mathbb{Q}}$ spanning $P_{\mathbb{Q}}$ over $\mathbb{R}$, so that $P_{\mathbb{Q}}=\langle v,w\rangle_{\mathbb{R}}$. Define
    \[
    W:=V\cap (P_{\mathbb{Q}})^{\perp} =\{x\in V\mid (x,v)=(x,w)=0\}.
    \] Since $V$ has signature $(3,18)$ and $P_{\mathbb{Q}}$ is a positive $2$-plane, its orthogonal complement $W$ has signature $(1,18)$. In particular, the cone $\{x\in W\mid x^2>0\}$ is nonempty and open in $W$. Because $v,w\in \Lambda\otimes_{\mathbb{Z}}\mathbb{Q}$, the subspace $W$ is cut out by linear equations with rational coefficients, hence it is defined over $\mathbb{Q}$:
    \[
    W_{\mathbb{Q}}:=W\cap \bigl(\Lambda\otimes_{\mathbb{Z}}\mathbb{Q}\bigr) =\ker\Bigl(F:\Lambda\otimes_{\mathbb{Z}}\mathbb{Q}\longrightarrow \mathbb{Q}^{3}\Bigr),
    \]
    where
    \[ 
    F(x)=\bigl((x,\delta),(x,v),(x,w)\bigr).
    \]
    Therefore, $W=W_{\mathbb{Q}}\otimes_{\mathbb{Q}}\mathbb{R}$, and in particular $W_{\mathbb{Q}}$ is dense in $W$. Choose $x\in W_{\mathbb{Q}}$ with $x^2>0$. Clearing the denominators, there exists an integer $N\ge 1$ such that
    \[
    L:=Nx \in W\cap \Lambda \qquad\text{and}\qquad L^{2}>0.
    \]
    Now $L\in W\subset (P_{\mathbb{Q}})^{\perp}$ implies $(L,\omega)=0$, and hence $L\in \operatorname{Pic}([\omega])$. Also, $P_{\mathbb{Q}}\subset V=\delta^{\perp}_{\mathbb{R}}$ implies $(\delta,\omega)=0$, hence $\delta\in \operatorname{Pic}([\omega])$. By the projectivity criterion for K3 surfaces, the existence of a class $L\in \operatorname{Pic}([\omega])$ with $L^{2}>0$ implies that the corresponding K3 surface is projective, i.e. \ $[\omega]\in H_{\delta}^{\mathrm{proj}}$. The density of rational oriented positive $2$-planes concludes the proof (Again by Remark \ref{rem:denalg} ).
    \end{proof}
\begin{proposition}
    Let $\delta \in \Lambda$ be a primitive vector such that $\delta^2 < -2$. Then the set
    \[
        H^{\mathrm{poor}}_{\delta} := \{[\alpha] \in H_{\delta} \mid \operatorname{Pic}([\alpha]) \text{ is poor} \}
    \]
    is dense in $H_{\delta}$ and has empty interior. Therefore, there exists a poor $K3$ surface $X$ with $\operatorname{Pic}(X) \neq 0$.
\end{proposition}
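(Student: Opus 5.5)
The proof has three parts: density, empty interior, and the existence statement. For density, view $H_\delta$ as an open subset of a smooth quadric in $\mathbb P(\delta^\perp_{\mathbb C})$, which is a connected complex manifold of dimension $19$. Then
\[
H_\delta\setminus H^{\mathrm{poor}}_\delta=\bigcup_{\gamma} \bigl(H_\delta\cap H_\gamma\bigr),
\]
where $\gamma$ ranges over nonzero vectors of $\operatorname{Pic}([\alpha])$ with $\gamma^2\ge -2$. Such a vector can never be a rational multiple of $\delta$: since $\delta$ is primitive, any nonzero multiple $m\delta$ has $(m\delta)^2=m^2\delta^2<-2$. Hence each such $\gamma$ is linearly independent of $\delta$. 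By Remark \ref{remark:proper} and the same linear algebra used to show $H_\delta$ is a hypersurface, $H_\delta\cap H_\gamma$ is then a proper closed analytic subset of $H_\delta$. A short check is needed here: the pairing restricted to $\langle\delta,\gamma\rangle^\perp$ has a positive $2$-plane in its complement, so the intersection is not all of $H_\delta$.

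There are only countably many $\gamma\in\Lambda$, so the complement of $H^{\mathrm{poor}}_\delta$ is a countable union of nowhere dense closed analytic subsets of $H_\delta$. By the Baire category theorem, $H^{\mathrm{poor}}_\delta$ is dense in $H_\delta$; indeed it is very general. The subtle point is what the poor lattice definition requires at such an $\alpha$. For a very general $\alpha\in H_\delta$ we will have $\operatorname{Pic}([\alpha])=\mathbb Z\delta$. In that case every nonzero $v=m\delta$ satisfies $v^2=m^2\delta^2<-2$, so the lattice is poor. This is exactly where primitivity and $\delta^2<-2$ enter.

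Empty interior follows from Lemma \ref{lem: dense proj k3}: the projective points $H^{\mathrm{proj}}_\delta$ are dense in $H_\delta$. At each such point there is $v$ with $v^2>0>-2$, so the point lies outside $H^{\mathrm{poor}}_\delta$. Therefore the complement of $H^{\mathrm{poor}}_\delta$ is dense, and $H^{\mathrm{poor}}_\delta$ contains no open set.

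For the existence statement, pick $[\alpha]\in H^{\mathrm{poor}}_\delta$ with $\operatorname{Pic}([\alpha])=\mathbb Z\delta$. By surjectivity of the period map (Global Torelli (1)) there is a marked $K3$ surface $(X,\phi)$ with this period. By Theorem \ref{theorem:KP}(2), $X$ is poor, and $\operatorname{Pic}(X)\cong\mathbb Z\delta\neq 0$. Such a $\delta$ exists, for example $e-2f$ in a copy of $U$, which has square $-4$.

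The main obstacle is the properness check, namely that $H_\delta\cap H_\gamma\neq H_\delta$ for every $\gamma$ not proportional to $\delta$. I would prove it by noting that equality would force $l_\gamma$ to vanish on $\delta^\perp_{\mathbb C}\cap\{x^2=0\}$. This quadric cone spans $\delta^\perp_{\mathbb C}$, so $l_\gamma$ would vanish on all of $\delta^\perp_{\mathbb C}$, giving $\gamma\in\mathbb Q\delta$, a contradiction.
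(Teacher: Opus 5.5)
Your proposal is correct and follows the paper's route. Density comes from the Baire category theorem applied to the countably many proper analytic subsets $H_\delta\cap H_\gamma$ with $\gamma^2\geq -2$, where properness holds because $\gamma\notin\mathbb{Q}\delta$ (Remark~\ref{remark:proper}), and empty interior comes from the density of projective periods (Lemma~\ref{lem: dense proj k3}). You also fill in three points the paper leaves implicit: primitivity rules out $\gamma\in\mathbb{Q}\delta$; the quadric cone spanning $\delta^\perp_{\mathbb C}$ is what actually justifies $H_\delta\subseteq H_\gamma\Rightarrow\gamma\in\mathbb{Q}\delta$; and the existence claim needs surjectivity of the period map together with an explicit $\delta=e-2f$.
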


\begin{proof}
        By Theorem~\ref{theorem:KP}, we have
        \[
            H^{\mathrm{poor}}_{\delta} 
            = \mathcal{U} \cap H_{\delta} 
            = H_{\delta} \setminus 
            \bigcup_{\substack{\gamma \in \Lambda \\ \gamma^2 \geq -2 \\ a\gamma \neq b \delta \  \forall a,b \in \mathbb{Z}\setminus \{0\}}} 
            \bigl(H_{\delta} \cap H_{\gamma}\bigr).
        \]
        Each $H_{\delta} \cap H_{\gamma}$ is either empty or a proper analytic subset of the complex manifold $H_{\delta}$, and there are only countably many such $\gamma$. If it is not empty and not proper, then 
        \[
        H_{\delta} = H_{\gamma}.
        \]
        By Remark \ref{remark:proper}, we have 
        \[
        \delta = \lambda\gamma, \qquad 0\neq \lambda \in \mathbb{Q}.
        \]
        Clearing the denominators, we obtain a contradiction to the fact that $ a\gamma \neq b \delta \  \forall a,b \in \mathbb{Z}\setminus \{0\}$. Therefore, the complement of their union, $H^{\mathrm{poor}}_{\delta}$, is a dense $G_\delta$ subset of $H_{\delta}$. It has an empty interior due to the Lemma \ref{lem: dense proj k3}.
\end{proof}
\begin{remark}
    Define the \emph{moduli space of marked poor $K3$ surfaces} by
    \[
        \mathcal{M}^{\mathrm{poor}} := \mathcal{P}^{-1}(\mathcal{U}) \subset \mathcal{T}_{\Lambda}.
    \]
    By the global Torelli theorem and the description of $\mathcal{U}$, the points of $\mathcal{M}^{\mathrm{poor}}$ are exactly the marked $K3$ surfaces $(X,\phi)$ such that $\operatorname{Pic}(X)$ is poor. Moreover, $\mathcal{M}^{\mathrm{poor}}$ is a dense $G_\delta$-subset of $\mathcal{T}_{\Lambda}$ whose complement is a countable union of codimension~$1$ analytic subvarieties. After forgetting the marking, the image of $\mathcal{M}^{\mathrm{poor}}$ in the moduli stack of $K3$ surfaces defines a substack, which we refer to as the moduli stack of (unmarked) poor $K3$ surfaces.
\end{remark}
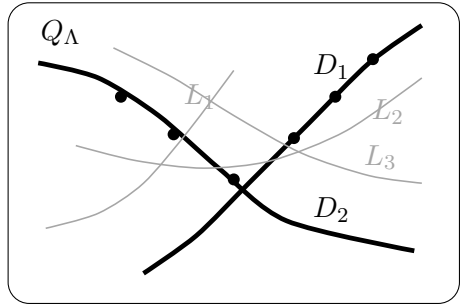
\begin{figure}[ht]
\centering
\begin{tikzpicture}[
    thickcurve/.style={line width=1.8pt},
    thinline/.style={gray!70, line width=0.6pt},
    dot/.style={circle, fill=black, inner sep=1.6pt}
]

% Ambient space X (size 6 x 4)
\draw[rounded corners=8pt] (0,0) rectangle (6,4);
\node at (0.7,3.6) {$Q_{\Lambda}$};

% -------------------------
% Thick curve 1 (unlabeled)
% -------------------------
\draw[thickcurve]
  plot[smooth] coordinates {
    (0.4,3.2) (1.2,3.0) (2.0,2.5) (2.8,1.8) (3.7,1.1) (5.4,0.7)
  };
  \node at (4.3,1.25) {$D_2$};
% Dots on thick curve 1 (unlabeled)
\node[dot] at (1.5,2.75) {};
\node[dot] at (2.2,2.25) {};
\node[dot] at (3.0,1.65) {};

% -------------------------
% Thick curve 2 (unlabeled)
% -------------------------
\draw[thickcurve]
  plot[smooth] coordinates {
    (1.8,0.4) (2.5,0.9) (3.2,1.6) (4.0,2.4) (4.8,3.2) (5.5,3.7)
  };
  \node at (4.3,3.15) {$D_1$};
% Dots on thick curve 2 (unlabeled)
\node[dot] at (3.8,2.20) {};
\node[dot] at (4.35,2.75) {};
\node[dot] at (4.85,3.25) {};

% -------------------------
% Three thin curves (labeled)
% -------------------------
\draw[thinline]
  plot[smooth] coordinates {
    (0.5,1.0) (1.2,1.2) (1.8,1.6) (2.4,2.3) (3.0,3.1)
  };
\node[gray!70] at (2.55,2.75) {$L_1$};

\draw[thinline]
  plot[smooth] coordinates {
    (0.9,2.1) (1.7,1.9) (2.6,1.8) (3.5,1.9) (4.5,2.3) (5.5,3.0)
  };
\node[gray!70] at (5.05,2.55) {$L_2$};

\draw[thinline]
  plot[smooth] coordinates {
    (1.4,3.4) (2.2,3.0) (3.0,2.5) (3.9,2.0) (4.8,1.7) (5.5,1.6)
  };
\node[gray!70] at (4.95,1.95) {$L_3$};

\end{tikzpicture}
\caption{Moduli space of poor $K3$ surfaces.}
\end{figure}
Combining these two results, we have Figure 1. As we can see, we have a wall and chamber (not open) decomposition of $Q_{\Lambda}$. where every point in the chamber represents a period point of poor $K3$ surface, in fact they have no line bundles at all. Not every wall contains poor $K3$ surfaces, but if a wall contains a poor $K3$ surface, then the set of all period points of poor $K3$ surfaces is dense, with an empty interior. The wall that contains a poor $K3$ surface is governed by $\delta^2$. We can think of each $L_{i}$ as $H_{\delta}$ for some $\delta^2 \geq -2$, and $D_i$ as $H_{\delta}$ for some $\delta^2 < -2$. The dotted points are the boundary points of the poor period points.
Finally we can put all of this as one Theorem.
\begin{theorem}
    Let $X$ be a compact K\"ahler manifold of dimension $\leq 3$.
    \begin{description}
        \item[1] If dimension $X$ is $2$, then $X$ is poor if and only if one of the following holds
        \begin{itemize}
            \item $X$ is a $K3$ surface such that, given any marking, $\phi : H^2(X,\mathbb{Z}) \rightarrow \Lambda$, we have $\mathcal{P}([(X,\phi)]) \in \mathcal{U}$.
            \item $X$ is a torus of algebraic dimension $0$.
        \end{itemize} 
        \item[2] If dimension $X$ is $3$, then $X$ is poor if and only if $X$ is a torus of algebraic dimension $0$.
    \end{description}
\end{theorem}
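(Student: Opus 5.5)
The plan is to reduce to Theorem~\ref{thm:main}, after first ruling out $\kappa(X)=-\infty$, and then to show that in dimensions $2$ and $3$ the finite group $G$ can only act on the torus factor by translations. The converse directions are quick. A $K3$ surface with period in $\mathcal U$ is poor by Theorem~\ref{theorem:KP}(2). A torus with $a(T)=0$ is poor by the observation from \cite{bandman2021bimeromorphic} recalled after the Corollary to Theorem~\ref{thm:main}.

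\textbf{Step 1: $\kappa(X)\neq-\infty$.} A poor $X$ contains no rational curves. In dimension $2$, a compact Kähler surface with $\kappa=-\infty$ is (bimeromorphic to) a ruled surface, since class VII surfaces are not Kähler. It is therefore covered by rational curves, which is a contradiction. In dimension $3$, Theorem~\ref{theorem:-infty} shows that $\kappa(X)=-\infty$ forces $X$ to be uniruled, again a contradiction.

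\textbf{Step 2: apply Theorem~\ref{thm:main}.} This gives $X\cong (T\times H)/G$ with $T$ and $H$ poor and $G$ acting freely. Every poor IHS factor has dimension at least $2$, and a poor torus has dimension at least $2$, because a curve is never poor. Hence the only possibilities are:
\begin{itemize}
\item $T\times H=T$ of dimension $2$;
\item $T\times H=H$ a poor $K3$ surface;
\item $T\times H=T$ of dimension $3$.
\end{itemize}
The option "$K3\times$ elliptic curve" is excluded because its torus factor would be $1$-dimensional and hence not poor. In the $K3$ case, Lemma~\ref{lemma:aut} forces $G$ to be trivial. Then $X$ is a poor $K3$ surface, and Theorem~\ref{theorem:KP} shows that its period lies in $\mathcal U$ for any marking.

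\textbf{Step 3: the torus case, which I expect to be the main obstacle.} Here $X=T/G$ with $T$ poor of dimension $n\in\{2,3\}$, and I must show that $G$ acts by translations. Suppose $g\in G$ acts as $x\mapsto Ax+b$ with linear part $A\neq 1$. Since $A$ has finite order and preserves the lattice $\Lambda_T$, the subspaces $\ker(A-1)$ and $\operatorname{im}(A-1)$ are complementary $A$-stable subspaces defined over $\mathbb Q$. They therefore give compact subtori $K$ and $I$ with $K+I=T$ and $K\cap I$ finite.

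Both subtori are nonzero. Indeed, $I\neq 0$ because $A\neq1$. Also $K\neq0$: otherwise $A-1$ would be surjective on $T$, so $(A-1)x=-b$ would be solvable and $g$ would have a fixed point, contradicting freeness.

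Hence $0<\dim K,\dim I<n$, and for $n\le 3$ one of $K$ or $I$ has dimension exactly $n-1$. That subtorus is a codimension-one analytic subset of $T$, contradicting poorness. So every element of $G$ is a translation. Then $X=T/G$ is again a complex torus, and it is poor, being a quotient of a poor manifold by a finite étale map (converse part of Theorem~\ref{thm:main}). Therefore $a(X)=0$.

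The delicate points are the following:
\begin{itemize}
\item checking that $\ker(A-1)$ and $\operatorname{im}(A-1)$ really are rational, i.e. that they span lattice-compatible subtori;
\item checking that their intersection is finite, using semisimplicity of the finite-order $A$;
\item the freeness argument giving $K\neq0$.
\end{itemize}
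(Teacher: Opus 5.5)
Your proposal is correct. Its skeleton is the paper's: rule out $\kappa(X)=-\infty$, apply Theorem~\ref{thm:main}, settle the $K3$ case with Lemma~\ref{lemma:aut} and Theorem~\ref{theorem:KP}, and use the Bandman--Zarhin criterion for tori. The difference is that you prove two things the paper's one-line proof leaves unsaid.

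\textbf{Excluding $\kappa(X)=-\infty$.} Theorem~\ref{thm:main} requires $\kappa(X)\neq-\infty$. The paper only points to H\"oring--Peternell in the introduction. You make this step explicit, including the surface case via the Enriques--Kodaira classification.

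\textbf{Translation argument.} This is the more substantial addition. Theorem~\ref{thm:main} together with the Bandman--Zarhin criterion only yields $X\cong T/G$ with $T$ a torus of algebraic dimension $0$. That is a finite \'etale quotient of such a torus, which is exactly how the result is phrased in the introduction. The statement, however, asserts that $X$ is itself a torus. Your Step~3 closes this gap:
\begin{itemize}
\item A non-translation $g$ with linear part $A\neq 1$ gives $\mathbb Q$-rational complex subspaces $\ker(A-1)$ and $\operatorname{im}(A-1)$.
\item Both are nonzero: the kernel by freeness, the image because $A\neq 1$.
\item Their dimensions add up to $n$, so for $n\le 3$ one of the associated subtori is a divisor.
\end{itemize}
In dimension $2$ the paper could instead invoke the remark in its dimension-$2$ preliminaries that the only possible poor minimal surfaces are tori and $K3$ surfaces (Enriques and bielliptic surfaces are projective). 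In dimension $3$ the paper offers no substitute for your argument.

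\textbf{Why the bound $n\le 3$ matters.} Take $S_1,S_2$ tori of algebraic dimension $0$, and $g(x,y)=(x+t,-y)$ on $S_1\times S_2$ with $t$ a nonzero $2$-torsion point. Then $(S_1\times S_2)/\langle g\rangle$ is poor but not a torus, since its $h^{1,0}$ equals $2$. So the uniform statement of Theorem~\ref{thm:main} cannot give more than ``quotient'' in general. Your extra step is exactly what upgrades it to ``torus'' in dimensions $2$ and $3$.

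\textbf{Simplifications of your delicate points.}
\begin{itemize}
\item You do not need semisimplicity of $A$ or finiteness of $K\cap I$. Rank--nullity already gives $\dim\ker(A-1)+\dim\operatorname{im}(A-1)=n$. The kernel and image of the integral matrix of $A$ on the lattice are automatically defined over $\mathbb Q$, and they are complex subspaces because $A$ is $\mathbb C$-linear.
\item You do not need to re-prove that $T/G$ is poor. $X$ is poor by hypothesis, so $a(X)=0$ follows directly from Lemma~\ref{lem:meromorphic-functions-constant}.
\end{itemize}
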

\begin{proof}
    The first statement follows from Theorem \ref{thm:main}, Theorem \ref{theorem:KP}, and \cite{bandman2021bimeromorphic}. The second statement follows from Theorem \ref{thm:main} and \cite{bandman2021bimeromorphic}.
\end{proof}

\section{Higher dimensional Poor manifolds}
\subsection{Preliminaries}
\begin{lemma}[Hartogs extension for vector bundles]
\label{lem:hartogs-vector-bundle}
Let \(X\) be a complex manifold and let \(Z\subset X\) be an analytic subset of
codimension at least \(2\). Set \(U:=X\setminus Z\). If \(E\) is a holomorphic
vector bundle on \(X\), then restriction gives an isomorphism
\[
H^0(X,E)\xrightarrow{\ \sim\ } H^0(U,E|_U).
\]
\end{lemma}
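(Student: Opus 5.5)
The plan is to reduce to the classical Hartogs extension theorem for holomorphic functions by working in local trivializations of \(E\).

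First, injectivity. Since \(Z\) has codimension at least \(2\), it is nowhere dense. More precisely, \(U\) meets every connected component of \(X\) in a dense open set, because a proper analytic subset of a connected manifold has empty interior. A holomorphic section \(s\in H^0(X,E)\) with \(s|_U=0\) therefore vanishes on a dense subset. By continuity it vanishes identically, so the restriction map is injective.

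Second, surjectivity.
\begin{enumerate}
\item Let \(t\in H^0(U,E|_U)\). Cover \(X\) by open sets \(V_\alpha\) on which \(E|_{V_\alpha}\simeq V_\alpha\times\mathbb C^r\). On \(V_\alpha\cap U=V_\alpha\setminus Z\), the section \(t\) becomes an \(r\)-tuple of holomorphic functions \((t^1_\alpha,\dots,t^r_\alpha)\).
\item Shrinking \(V_\alpha\) to coordinate polydiscs, apply the Riemann--Hartogs extension theorem for functions to each \(t^i_\alpha\). This theorem says that a holomorphic function on \(V\setminus Z\), where \(V\) is a polydisc and \(Z\cap V\) is analytic of codimension \(\ge 2\), extends uniquely to a holomorphic function on \(V\). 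This yields a holomorphic section \(\tilde t_\alpha\) of \(E|_{V_\alpha}\) extending \(t|_{V_\alpha\cap U}\).
\item Check compatibility. On an overlap \(V_\alpha\cap V_\beta\), the sections \(\tilde t_\alpha\) and \(g_{\alpha\beta}\tilde t_\beta\) (with \(g_{\alpha\beta}\) the holomorphic transition matrix) are both holomorphic. They agree on the dense open subset \((V_\alpha\cap V_\beta)\setminus Z\), so they agree everywhere by the identity argument from the injectivity step.
\item Conclude that the \(\tilde t_\alpha\) glue to a global section \(\tilde t\in H^0(X,E)\) with \(\tilde t|_U=t\).
\end{enumerate}

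The only nontrivial input is the scalar Hartogs/Riemann extension theorem across analytic sets of codimension \(\ge 2\). I would quote it from standard references, for example Grauert--Remmert or the second Riemann extension theorem. If a self-contained argument were wanted, the main obstacle would be the case where \(Z\) is singular. I would handle it by induction on a stratification of \(Z\) into smooth locally closed strata. Near a smooth point of a stratum of codimension \(\ge 2\), one chooses coordinates in which the stratum is \(\{z_1=z_2=0\}\) and applies the classical Hartogs figure argument in the \((z_1,z_2)\)-variables, via Cauchy integrals in \(z_1\) for fixed \(z_2\ne 0\). One then extends stratum by stratum, from the top-dimensional strata of \(Z\) downward.
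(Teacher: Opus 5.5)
Your proposal is correct and follows the same route as the paper: you trivialize \(E\) locally, apply the scalar Hartogs (second Riemann) extension theorem to each component, and glue the local extensions using uniqueness on the dense set \(U\). You also spell out injectivity and compatibility with the transition matrices \(g_{\alpha\beta}\), which the paper leaves implicit in the phrase ``glue by uniqueness''.
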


\begin{proof}
The statement is local on \(X\). After trivializing \(E\) on a coordinate open
set, a section of \(E|_U\) is a tuple of holomorphic functions on the complement
of an analytic subset of codimension at least \(2\). By Hartogs' extension
theorem, each of these functions extends uniquely. These local extensions glue
by uniqueness.
\end{proof}

\begin{lemma}[Meromorphic functions are constant]
\label{lem:meromorphic-functions-constant}
Let \(X\) be a compact connected complex manifold with no irreducible analytic
subsets of codimension \(1\). Then every meromorphic function on \(X\) is
constant.
\end{lemma}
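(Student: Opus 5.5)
The plan is to show that a nonconstant meromorphic function \(f\) on \(X\) forces a codimension-one analytic subset, which contradicts the hypothesis. I will use the local structure of meromorphic functions on a complex manifold. At each point \(x\in X\) the local ring \(\mathcal O_{X,x}\) is a UFD. Hence the germ of \(f\) at \(x\) can be written as \(f_x=g/h\) with \(g,h\in\mathcal O_{X,x}\) coprime. This representation is unique up to units, so the polar set \(P\) (where \(h\) vanishes) and the indeterminacy set \(I\) (where \(g\) and \(h\) both vanish) are well-defined closed analytic subsets of \(X\).

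The key step is that \(P\) is either empty or has pure codimension one. This holds because locally \(P=\{h=0\}\), the zero set of a single holomorphic function that is not identically zero. Such a set is either empty near \(x\) (when \(h\) is a unit) or a hypersurface of pure codimension one. Since \(X\) has no irreducible analytic subsets of codimension one, every irreducible component of \(P\) would violate the hypothesis. I conclude that \(P=\varnothing\), so \(f\) is holomorphic on all of \(X\). Since \(X\) is compact and connected, the maximum principle shows that \(f\) is constant.

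An alternative, closer to Lemma~\ref{lem:lemma1}, is to consider the line bundle \(\mathcal O_X(P)\) and the sections \(h\) and \(g\). I prefer the direct argument above because it avoids having to define divisors when there are none.

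The main point requiring care is the coprimality and uniqueness of the local representation \(g/h\), which is needed so that \(P\) is globally a well-defined analytic set rather than an artifact of the chosen local representatives. This follows from factoriality of \(\mathcal O_{X,x}\) and from the fact that coprimality at \(x\) persists at nearby points (the coherence argument behind the standard result that \(\{g=h=0\}\) has codimension at least \(2\)). I would cite this standard fact rather than reprove it. Once it is in place, the remaining steps are immediate.
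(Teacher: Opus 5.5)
Your argument is correct, but it reaches the contradiction by a different route than the paper.

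The paper regards \(f\) as a meromorphic map \(X\dashrightarrow\mathbb P^1\) and asserts that a general fiber \(f^{-1}(a)\) is a nonempty codimension-one analytic set. That assertion leaves two things unproved. First, nonemptiness, which needs the fact that a nonconstant meromorphic map to \(\mathbb P^1\) is surjective; this comes from properness of the graph closure and Remmert's proper mapping theorem. Second, that the level set is a hypersurface, which rests on writing locally \(f-a=(g-ah)/h\) with \(g-ah\) and \(h\) coprime.

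You look only at the fiber over \(\infty\), the polar set. Instead of showing it is nonempty, you show it must be empty, and then apply the maximum principle on the compact connected manifold \(X\). What your route gains is that nonemptiness of a fiber never has to be established. You also address explicitly why the relevant analytic set is globally well defined, using coprime local representatives in the factorial rings \(\mathcal O_{X,x}\) and the persistence of coprimality at nearby points. The paper's argument is shorter but leaves both points implicit. It needs the same local input as yours.
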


\begin{proof}
Suppose \(f\) is a nonconstant meromorphic function on \(X\). Then \(f\) defines
a nonconstant meromorphic map
\[
X\dashrightarrow \mathbb P^1.
\]
For a general point \(a\in \mathbb P^1\), the fiber \(f^{-1}(a)\) is a nonempty
analytic subset of codimension \(1\). This gives a divisor on \(X\), contradicting
the assumption. Hence \(f\) is constant.
\end{proof}

\begin{lemma}[Generic proportionality implies constant proportionality]
\label{lem:generic-proportionality-constant}
Let \(X\) be a compact connected complex manifold with no irreducible analytic
subsets of codimension \(1\). Let \(E\) be a holomorphic vector bundle on \(X\),
and let
\[
s,t\in H^0(X,E)
\]
be two global sections. Suppose \(t\neq 0\) and \(s,t\) are generically
proportional. Then there exists a constant \(c\in\mathbb C\) such that
\[
s=ct.
\]
\end{lemma}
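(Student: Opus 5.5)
The plan is to form the quotient \(f:=s/t\) and show it is a global meromorphic function, which Lemma~\ref{lem:meromorphic-functions-constant} then forces to be constant. The difficulty is that \(s/t\) is not a priori a quotient of two holomorphic functions, because \(s\) and \(t\) are sections of a vector bundle. We therefore work locally.

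On a trivializing open set \(V\) for \(E\), write \(s=(s_1,\dots,s_r)\) and \(t=(t_1,\dots,t_r)\) as tuples of holomorphic functions. The hypothesis \(t\neq 0\) implies that \(t\) vanishes identically on no nonempty open set, by the identity theorem and the connectedness of \(X\). Generic proportionality means that on a dense open subset there is a function \(\lambda\) with \(s=\lambda t\). Equivalently, all \(2\times 2\) minors \(s_it_j-s_jt_i\) vanish on a dense open set, hence identically. On \(V\), for any index \(j\) with \(t_j\not\equiv 0\) on a component, put \(f_V:=s_j/t_j\), which is meromorphic on \(V\). The vanishing of the minors shows that \(s_j/t_j=s_i/t_i\) wherever both denominators are nonzero, so \(f_V\) does not depend on the choice of \(j\). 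Changing the trivialization replaces \(s\) and \(t\) by \(g s\) and \(g t\) for the same \(g\in GL_r(\mathcal O)\). The proportionality \(s=f_V t\), valid on a dense set, is preserved under this change, so the local quotients agree on overlaps. They therefore glue to a global meromorphic function \(f\) on \(X\) with \(s=ft\) on a dense open set.

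By Lemma~\ref{lem:meromorphic-functions-constant}, \(f\equiv c\) is constant. Then \(s-ct\) is a holomorphic section of \(E\) that vanishes on a dense open set, so \(s=ct\) by continuity.

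The main point needing care is checking that \(f_V\) is a genuine meromorphic function, and not merely defined off the zero set of \(t\). This holds because it is locally a quotient \(s_j/t_j\) of holomorphic functions with denominator not identically zero. A cleaner but equivalent framing is to view \(f\) as a meromorphic section of the trivial line bundle. No codimension-two extension argument via Lemma~\ref{lem:hartogs-vector-bundle} is needed, because meromorphy is local.
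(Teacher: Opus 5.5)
Your proposal is correct and follows the paper's argument: write \(s=ft\) for a global meromorphic function \(f\), apply Lemma~\ref{lem:meromorphic-functions-constant} to make \(f\) constant, and conclude \(s=ct\) by density. Your local-trivialization and \(2\times 2\)-minor argument just supplies the check, left implicit in the paper, that \(s/t\) really glues to a meromorphic function on all of \(X\); this is best done on connected trivializing sets.
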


\begin{proof}
On the dense open set where \(t\neq 0\) and the proportionality is defined,
there is a meromorphic function \(f\) such that
\[
s=ft.
\]
By Lemma~\ref{lem:meromorphic-functions-constant}, \(f\) is constant. Thus
\(s=ct\) on a dense open subset of \(X\), and hence everywhere by holomorphicity.
\end{proof}

\begin{lemma}[Generic linear dependence gives a constant linear relation]
\label{lem:generic-linear-dependence-constant}
Let \(X\) be a compact connected complex manifold with no irreducible analytic
subsets of codimension \(1\). Let \(E\) be a holomorphic vector bundle on \(X\),
and let
\[
s_1,\dots,s_m\in H^0(X,E)
\]
be global sections. If \(s_1,\dots,s_m\) are generically linearly dependent,
then they satisfy a nontrivial constant linear relation:
\[
c_1s_1+\cdots+c_ms_m=0
\]
for some \((c_1,\dots,c_m)\in\mathbb C^m\setminus\{0\}\).
\end{lemma}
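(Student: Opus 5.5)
We prove Lemma~\ref{lem:generic-linear-dependence-constant} by induction on \(m\), reducing to Lemma~\ref{lem:generic-proportionality-constant} and Lemma~\ref{lem:meromorphic-functions-constant}. The case \(m=1\) is immediate. If \(s_1\) vanishes on a nonempty open set, it vanishes identically by the identity theorem, and \(c_1=1\) works. For general \(m\), we may first discard the trivial case in which some \(s_i\equiv 0\).

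\textbf{Reduction to a minimal dependent subfamily.} Among all subsets \(J\subseteq\{1,\dots,m\}\) for which \(\{s_j\}_{j\in J}\) is generically linearly dependent, choose one of minimal cardinality \(k\). Reindex so that \(J=\{1,\dots,k\}\). By minimality, \(s_1,\dots,s_{k-1}\) are generically linearly independent. That is, the section
\[
\sigma:=s_1\wedge\cdots\wedge s_{k-1}\in H^0\bigl(X,\Lambda^{k-1}E\bigr)
\]
is not identically zero, and the set \(U:=\{\sigma\neq 0\}\) is a dense Zariski-open subset of \(X\) (\(X\) is connected). On \(U\) the sections \(s_1,\dots,s_{k-1}\) are pointwise independent, while \(s_1,\dots,s_k\) are dependent. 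Hence
\[
s_k=\sum_{i<k} f_i\, s_i \quad\text{on } U,
\]
and the coefficients \(f_i\) are uniquely determined.

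\textbf{The coefficients are meromorphic.} By Cramer's rule,
\[
f_i\,\sigma \;=\; s_1\wedge\cdots\wedge s_{i-1}\wedge s_k\wedge s_{i+1}\wedge\cdots\wedge s_{k-1}.
\]
Both sides are global holomorphic sections of \(\Lambda^{k-1}E\). The two sections \(\tau_i:=s_1\wedge\cdots\wedge s_k\wedge\cdots\wedge s_{k-1}\) (with \(s_k\) in slot \(i\)) and \(\sigma\) are therefore generically proportional, and \(\sigma\neq 0\). Lemma~\ref{lem:generic-proportionality-constant} gives constants \(c_i\) with \(\tau_i=c_i\sigma\). Thus \(f_i=c_i\) on \(U\).

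The point needing care is that \(f_i\) is really a meromorphic function, so that the earlier lemmas apply. Concretely, in a local frame \(f_i\) is a quotient of holomorphic coordinates of \(\tau_i\) by a nonvanishing coordinate of \(\sigma\). This is exactly the situation of Lemma~\ref{lem:generic-proportionality-constant}, and invoking that lemma directly for the pair \((\tau_i,\sigma)\) avoids the issue entirely.

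\textbf{Conclusion.} We now have \(s_k-\sum_{i<k}c_is_i=0\) on the dense set \(U\), hence everywhere by continuity. Setting \(c_j=0\) for \(j>k\) and taking coefficient \(1\) for \(s_k\), we obtain a nontrivial constant relation among \(s_1,\dots,s_m\), as required.
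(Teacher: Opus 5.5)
Your proof is correct and follows essentially the same route as the paper: take a generically independent subfamily plus one section generically dependent on it, and show the coefficients are constant because $X$ has no nonconstant meromorphic functions. Your minimal dependent subfamily plays the role of the paper's maximal independent one, and your only real refinement is to justify the meromorphy of the coefficients via Cramer's rule, $f_i\sigma=\tau_i$, before invoking Lemma~\ref{lem:generic-proportionality-constant}, a step the paper simply asserts (your announced induction on $m$ is never actually used, so drop that phrase).
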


\begin{proof}
Choose a maximal generically linearly independent subcollection. After relabeling,
we may assume this subcollection is
\[
s_1,\dots,s_k
\]
with \(k<m\). Then \(s_{k+1}\) is generically a linear combination of
\(s_1,\dots,s_k\). Hence there exist meromorphic functions
\(f_1,\dots,f_k\) on \(X\) such that
\[
s_{k+1}=f_1s_1+\cdots+f_ks_k
\]
on a dense open subset of \(X\). By
Lemma~\ref{lem:meromorphic-functions-constant}, each \(f_i\) is constant.
Therefore
\[
s_{k+1}-f_1s_1-\cdots-f_ks_k=0
\]
is a nontrivial constant linear relation among the sections.
\end{proof}

A holomorphic line bundle \(L\) on a compact K\"ahler manifold \(X\) is called
\emph{semipositive} if there exists a smooth Hermitian metric \(h\) on \(L\)
such that
\[
\sqrt{-1}\,\Theta_h(L)\geq 0
\]
as a smooth real \((1,1)\)-form.

In \cite[Theorem 2.7.3]{demailly2001pseudo}, Demailly--Peternell--Schneider prove the following result. 
\begin{theorem}\label{thm:use}
    If $X$ is a compact K\"ahler manifold of dimension $n$, with semipositive canonical bundle then at least one of the following holds.
    \begin{itemize}
        \item The algebraic dimension $a(X)>0$.
        \item The Euler characteristic $\chi(X,\mathcal{O}_X)=0$. 
    \end{itemize}
\end{theorem}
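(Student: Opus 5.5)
The statement is quoted from \cite[Theorem 2.7.3]{demailly2001pseudo}. My plan is to reconstruct its proof by contraposition: assume $a(X)=0$ and derive $\chi(X,\mathcal O_X)=0$. Fix a smooth Hermitian metric $h$ on $K_X$ with $\theta:=\sqrt{-1}\,\Theta_h(K_X)\geq 0$.

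\emph{Step 1.} Show that $\theta^n$ vanishes identically. If $\theta$ were strictly positive at some point, then $\int_X\theta^n>0$. A nef class with positive top self-intersection is big (Demailly's holomorphic Morse inequalities), so $h^0(X,K_X^{\otimes m})$ grows like $m^n$. Then $X$ is Moishezon, so $a(X)=n>0$, a contradiction. Hence $\theta$ is everywhere degenerate, and the numerical dimension satisfies $\nu:=\nu(K_X)<n$.

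\emph{Step 2.} Control the cohomology of the powers $L_m:=K_X^{\otimes(m-1)}$, each of which is semipositive. Enoki's and Takegoshi's hard Lefschetz theorem for semipositive bundles gives surjections
\[
\omega^q\wedge\cdot:\ H^0(X,\Omega_X^{n-q}\otimes L_m)\twoheadrightarrow H^q(X,K_X\otimes L_m)=H^q(X,K_X^{\otimes m}).
\]
Using $a(X)=0$ and Lemma~\ref{lem:generic-linear-dependence-constant} (its conclusion only needs that meromorphic functions are constant, which is what $a(X)=0$ means), I will bound each $h^0(X,\Omega^{n-q}_X\otimes L_m)$ independently of $m$. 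Consequently every $h^q(X,K_X^{\otimes m})$ is bounded uniformly in $m$.

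\emph{Step 3.} By Riemann--Roch, $P(m):=\chi(X,K_X^{\otimes m})$ is a polynomial in $m$. By Step 2 it is bounded, hence constant. Its top coefficients are governed by $\theta^n$ and lower mixed terms, which are consistent with Step 1. Evaluating, $P(0)=\chi(X,\mathcal O_X)$, and by Serre duality $P(1)=\chi(X,K_X)=(-1)^n\chi(X,\mathcal O_X)$. The remaining task is to show this constant is $0$. I would do this by using the degenerate direction of $\theta$ (the kernel foliation from Step 1) to produce, via a Bochner-type argument, a nonzero parallel holomorphic $1$-form, or a twisted analogue, that pairs off the Hodge groups $H^q(X,\mathcal O_X)$ in the alternating sum.

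\emph{Main obstacle.} Step 3 in the case where $\theta\equiv 0$, i.e.\ $c_1(K_X)_{\mathbb R}=0$, is the crux. There Theorem~\ref{thm:theorem1} applies. Taking a finite étale cover multiplies $\chi(\mathcal O)$ by the degree, and $a(X)=0$ rules out Calabi--Yau factors. Any positive-dimensional torus factor forces $\chi=0$. Products of IHS factors, however, have $\chi(\mathcal O)=\prod(n_i+1)\neq 0$; a very general $K3$ surface with $a=0$ and $\chi=2$ is an example. So this case cannot be closed with only the hypotheses as quoted. I would consult the precise formulation in \cite{demailly2001pseudo}, where I expect a further hypothesis such as a positive-dimensional torus or Albanese part, or a different sign convention, and verify that hypothesis before relying on the theorem in the proof of Theorem~\ref{thm:main theorem}.
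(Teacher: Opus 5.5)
Your proposal does not prove the statement, but your closing diagnosis is correct and is the key point: as quoted, the statement is false, so no proof can exist. The paper gives no argument of its own; it only cites \cite[Theorem 2.7.3]{demailly2001pseudo}. Your counterexample is valid, and the paper itself supplies it. A poor $K3$ surface $S$ (these exist by Theorem~\ref{theorem:KP}) has $K_S\simeq\mathcal O_S$, which is semipositive for the flat metric. It has $a(S)=0$ by Lemma~\ref{lem:meromorphic-functions-constant}, and $\chi(S,\mathcal O_S)=2$. The paper actually uses the statement in dimension $4$, and there are counterexamples there too. Take $S_1\times S_2$ with $S_1,S_2$ poor $K3$ surfaces (poor by Lemma~\ref{lemma:lem0}), or a poor IHS fourfold. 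These have $a=0$ and $\chi(\mathcal O)=4$ or $3$ respectively.

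Your Steps 2 and 3 prove more than you claim: they settle the statement whenever $n$ is odd, with no need for Step 1 or any Bochner argument. Suppose $a(X)=0$. Lemma~\ref{lem:generic-linear-dependence-constant}, which only uses constancy of meromorphic functions, gives $h^0(X,E)\le\operatorname{rk}E$ for every holomorphic vector bundle $E$. The Enoki--Takegoshi surjection then yields $h^q(X,K_X^{\otimes m})\le\binom nq$ for all $m\ge 1$. Hence $P$ is bounded and so constant, and $\chi(X,\mathcal O_X)=P(0)=P(1)=(-1)^n\chi(X,\mathcal O_X)$. This forces $\chi(X,\mathcal O_X)=0$ when $n$ is odd. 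For $n$ even the identity carries no information, and the counterexamples show nothing more can be extracted. In particular, the kernel-foliation idea at the end of Step 3 cannot succeed in even dimension, since it would prove a false statement.

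This matters downstream. The proof of Theorem~\ref{thm:main theorem} applies the statement with $n=4$, so as written it rests on a false assertion. All the counterexamples above have $\kappa=0$, whereas the paper only invokes the statement under the assumption $\kappa(X)=-\infty$. What is needed is therefore a correctly sourced version with such an extra hypothesis, for instance one whose alternatives are $\kappa(X)\ge 0$ or $\chi(X,\mathcal O_X)=0$. Since $n=4$ is even, your Steps 2--3 do not provide it. As you propose, the precise formulation in \cite{demailly2001pseudo} has to be checked.
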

\subsection{Main Theorem}
We now prove Proposition \ref{prop:key-rigidity-nodiv-all-dim}.

\begin{proof}
On any complex \(n\)-fold there is a canonical isomorphism \(T_X\otimes K_X\simeq \Omega_X^{n-1}\). Let \(w_i\in H^0(X,T_X\otimes K_X)\) be the section corresponding to \(\eta_i\). Equivalently, \(w_i\) gives a bundle map \(K_X^{-1}\to T_X\). By assumption, the zero loci of \(w_1\) and \(w_2\) have codimension at least \(2\). We first claim that \(w_1\wedge w_2\) is not identically zero as a section of \(\wedge^2T_X\otimes K_X^{\otimes 2}\). Indeed, if \(w_1\wedge w_2\equiv 0\), then \(w_1\) and \(w_2\) are generically proportional. By Lemma~\ref{lem:generic-proportionality-constant}, they are constant multiples of each other, and hence \(\eta_1\) and \(\eta_2\) are linearly dependent, a contradiction.

Thus \(w_1\wedge w_2\) is a nonzero global section of \(\wedge^2T_X\otimes K_X^{\otimes 2}\), and again its zero locus has codimension at least \(2\). Let \(U\subset X\) be the complement of the union of the zero loci of \(w_1\), \(w_2\), and \(w_1\wedge w_2\). Then \(\operatorname{codim}(X\setminus U)\geq 2\). On \(U\), the images of \(w_1\) and \(w_2\) define line subbundles \(L_i:=\operatorname{im}(w_i)\subset T_U\), and \(L_i\simeq K_U^{-1}\). Since \(w_1\wedge w_2\) is nowhere vanishing on \(U\), these line subbundles are pointwise linearly independent. Hence they span a rank \(2\) subbundle \(D:=L_1\oplus L_2\subset T_U\).

Let \(A:=\operatorname{Ann}(D)\subset \Omega_U^1\). Then \(A\) has rank \(n-2\), and there is an exact sequence
\[
0\longrightarrow A\longrightarrow \Omega_U^1\longrightarrow D^*\longrightarrow 0.
\]
Let \(\beta\in W\), and view \(\beta\) as a bundle map \(\beta^\#:T_X\to K_X\). The condition \(\beta\wedge\eta_i=0\) is equivalent to saying that \(\beta^\#\) vanishes on \(L_i\). Indeed, locally write \(\beta=\theta\otimes\Omega\) and \(\eta_i=\iota_{v_i}\Omega\), where \(\Omega\) is a local generator of \(K_X\). Then
\[
\beta\wedge\eta_i=(\theta\otimes\Omega)\wedge\iota_{v_i}\Omega
=\theta(v_i)\,\Omega\otimes\Omega,
\]
up to the usual harmless sign. Hence \(\beta\wedge\eta_i=0\) if and only if \(\theta(v_i)=0\), which is exactly the condition that \(\beta^\#\) vanish on \(L_i\). Therefore every \(\beta\in W\) restricts on \(U\) to a section of \(A\otimes K_U\), and we get an injection
\[
W\hookrightarrow H^0(U,A\otimes K_U).
\]

Set \(E:=A\otimes K_U\). Then \(\operatorname{rk}E=n-2\). We compute its determinant. From the exact sequence above,
\[
\det A\simeq \det(\Omega_U^1)\otimes \det(D^*)^{-1}
\simeq K_U\otimes \det D.
\]
Since \(D=L_1\oplus L_2\) and \(L_i\simeq K_U^{-1}\), we have \(\det D\simeq K_U^{-2}\). Hence \(\det A\simeq K_U^{-1}\), and therefore
\[
\det E=\det(A\otimes K_U)=\det A\otimes K_U^{\otimes(n-2)}
\simeq K_U^{\otimes(n-3)}.
\]

First suppose \(n\geq 4\). If \(\dim_{\mathbb C}W\geq n-2\), choose linearly independent elements \(\beta_1,\dots,\beta_{n-2}\in W\). Their restrictions to \(U\) are sections of \(E\), so
\[
s:=\beta_1\wedge\cdots\wedge\beta_{n-2}
\in H^0(U,\det E)\simeq H^0\bigl(U,K_U^{\otimes(n-3)}\bigr).
\]
We claim that \(s\not\equiv 0\). If \(s\equiv 0\), then \(\beta_1,\dots,\beta_{n-2}\) are generically linearly dependent as sections of \(E\). By Lemma~\ref{lem:generic-linear-dependence-constant}, they satisfy a nontrivial constant linear relation, contradicting their linear independence in \(W\). Thus \(s\neq 0\). By Lemma~\ref{lem:hartogs-vector-bundle}, \(s\) extends uniquely across \(X\setminus U\) to a nonzero section of \(K_X^{\otimes(n-3)}\). Since \(X\) has no divisors, this section is nowhere vanishing. Hence \(K_X^{\otimes(n-3)}\simeq \mathcal O_X\). Therefore, if \(K_X^{\otimes(n-3)}\not\simeq \mathcal O_X\), we must have \(\dim_{\mathbb C}W\leq n-3\).

It remains to treat the case \(n=3\). Then \(A\) has rank \(1\), and the same determinant computation gives \(A\otimes K_U\simeq \mathcal O_U\). Hence
\[
W\hookrightarrow H^0(U,\mathcal O_U).
\]
By Lemma~\ref{lem:hartogs-vector-bundle}, \(H^0(U,\mathcal O_U)\simeq H^0(X,\mathcal O_X)\). Since \(X\) is compact and connected, \(H^0(X,\mathcal O_X)=\mathbb C\). Thus \(\dim_{\mathbb C}W\leq 1\), proving the lemma.
\end{proof}

We now prove Theorem \ref{thm:main theorem}.

\begin{proof}
Assume, for contradiction, that \(\kappa(X)=-\infty\). Since \(K_X\) is
semipositive, Theorem~\ref{thm:use} gives either \(a(X)>0\) or
\(\chi(X,\mathcal O_X)=0\). The first case is impossible because \(X\) has no
codimension-one subvarieties, hence no nonconstant meromorphic functions. Thus
\[
\chi(X,\mathcal O_X)=0.
\]

Since \(X\) is simply connected and K\"ahler, \(h^{1,0}(X)=0\). Also
\(\kappa(X)=-\infty\) gives \(h^0(X,K_X)=0\). Therefore
\[
0=\chi(X,\mathcal O_X)
=1+h^{2,0}(X)-h^{3,0}(X),
\]
so
\[
h^{3,0}(X)=1+h^{2,0}(X).
\]
Moreover \(h^{2,0}(X)\neq 0\), because otherwise \(X\) would be projective by
Kodaira's criterion, hence would contain a divisor. Thus
\[
h^{3,0}(X)\geq 2.
\]

Choose linearly independent
\[
\eta_1,\eta_2\in H^0(X,\Omega_X^3).
\]
Since \(\kappa(X)=-\infty\), we have \(H^0(X,K_X^2)=0\). Hence for every
\(\beta\in H^0(X,\Omega_X^1\otimes K_X)\),
\[
\beta\wedge\eta_i\in H^0(X,K_X^2)=0.
\]
Therefore
\[
W=H^0(X,\Omega_X^1\otimes K_X).
\]
By Serre duality,
\[
H^4(X,T_X)^\vee\simeq H^0(X,\Omega_X^1\otimes K_X),
\]
so the hypothesis gives
\[
\dim_{\mathbb C}W\geq 2.
\]

But Proposition~\ref{prop:key-rigidity-nodiv-all-dim}, applied with \(n=4\),
gives
\[
\dim_{\mathbb C}W\leq 1
\]
because \(K_X\not\simeq\mathcal O_X\) under the assumption
\(\kappa(X)=-\infty\). This contradiction proves \(\kappa(X)\geq 0\).
\end{proof}
\section*{Acknowledgments} I am grateful to my advisors, John Lesieutre, Yuriy Zarhin, and Tatiana Bandman, for useful and stimulating discussions, as well as for their very helpful comments. I am also grateful to Frédéric Campana for helpful comments on the first version of this paper. I would also like to thank several graduate students at Penn State, including Andy B. Day, Eugene Henninger-Voss, Neelarnab Raha, Satwata Hans, Xingkai Wang, and Louis Diaz, for helpful discussions.

\bibliographystyle{amsalpha}

\end{document}